\documentclass[11pt]{amsart}

\usepackage{lmodern}
\usepackage{amsmath,amssymb,amsthm,mathtools,amscd}
\usepackage{enumitem}
\usepackage{microtype}
\usepackage{needspace}
\usepackage[hidelinks]{hyperref}

\newcommand{\PP}{\mathbb P}
\newcommand{\QQ}{\mathbb Q}
\newcommand{\RR}{\mathbb R}
\newcommand{\ZZ}{\mathbb Z}
\newcommand{\CC}{\mathbb C}
\newcommand{\Sym}{\operatorname{Sym}}
\newcommand{\Pic}{\operatorname{Pic}}
\newcommand{\Hom}{\operatorname{Hom}}

\theoremstyle{plain}
\newtheorem{theorem}{Theorem}[section]
\newtheorem{proposition}[theorem]{Proposition}
\newtheorem{lemma}[theorem]{Lemma}
\newtheorem{corollary}[theorem]{Corollary}
\theoremstyle{remark}
\newtheorem{remark}[theorem]{Remark}

\title[Odd-degree rigidity and a characterization of complete flags]{Odd-Degree Rigidity and a Numerical Characterization of Complete Flag Varieties}
\author{Kiwamu Watanabe}
\date{\today}
\address{Department of Mathematics, Faculty of Science and Engineering, Chuo University.
1-13-27 Kasuga, Bunkyo-ku, Tokyo 112-8551, Japan}
\email{watanabe@math.chuo-u.ac.jp}
\subjclass[2020]{Primary 14M15; Secondary 14J45, 14E30, 57R20}
\keywords{Fano manifold, complete flag variety, maps of odd degree, homotopy rigidity, quadratic intersection relation, extremal contraction}

\begin{document}

\begin{abstract}
Let $F=G/B$ be the complete flag variety of a connected semisimple
complex linear algebraic group. We prove that if a smooth Fano variety $X$
has the same dimension and Picard number as $F$, then the existence of a
continuous map $X\to F$ of odd topological degree implies $X$ is
isomorphic to $F$.
In particular, a smooth Fano variety homotopy equivalent to $F$ is
isomorphic to $F$.

The algebraic input is a numerical characterization of complete flags:
a positive-dimensional smooth Fano variety $X$ is a complete flag variety
if and only if its pseudoindex $\iota_X$ is at least two and
$\ker(\Sym^2N^1(X)_\RR\to N^2(X)_\RR)$ contains a positive-definite
quadratic form. The pseudoindex condition can equivalently be replaced
by the divisibility of $-K_X$ by two in $\Pic(X)$. The key observation is
that a single definite quadratic relation excludes surfaces in every fiber of any 
elementary contraction; standard contraction theorems then yield smooth
$\PP^1$-fibrations and the characterization of complete flags applies.
For the odd-degree theorem, Poincar\'e duality and a mod-two Wu-class
argument show that the anticanonical class is divisible by two;
comparison of the integral coroot lattices then identifies the flag.
Most proofs were developed through extensive discussions with OpenAI's ChatGPT (GPT-5.6 Sol and GPT-6), based on mathematical directions supplied by the author and arguments suggested by ChatGPT; see the AI Disclosure.
\end{abstract}

\maketitle

\section{Introduction}

A \emph{complete flag variety} is a quotient $F=G/B$, where $G$
is a connected semisimple complex linear algebraic group and $B$ a
Borel subgroup. Let $X$ be a smooth Fano variety and let $f:X\to F$
be a map continuous in the classical (complex-analytic) topology;
$f$ need not be algebraic. Write $\rho(Y)$ for the Picard
number of a smooth projective variety $Y$. If
$\dim_{\CC}X=\dim_{\CC}F=n$, the complex structures orient the
underlying compact real $2n$-manifolds, and their fundamental classes
$[X]$, $[F]$ determine the \emph{topological degree} $\deg f\in\ZZ$
uniquely by $f_*[X]=(\deg f)[F]$ in $H_{2n}(F,\ZZ)$.
Our first result is the following rigidity theorem.
\begin{theorem}\label{thm:odd-degree-main}
Let $F=G/B$ be a complete flag variety and let $X$ be a smooth Fano
variety with $\dim X=\dim F$ and $\rho(X)=\rho(F)$. If there exists a
continuous map $f:X\to F$ of odd topological degree, then $X$ is
isomorphic to $F$. In particular, a smooth Fano variety homotopy equivalent to $F$ is
isomorphic to $F$.
\end{theorem}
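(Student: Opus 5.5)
The plan is to reduce Theorem~\ref{thm:odd-degree-main} to the numerical characterization announced in the abstract: a smooth Fano $X$ is a complete flag variety if $-K_X$ is divisible by two in $\Pic(X)$ and $\ker(\Sym^2N^1(X)_\RR\to N^2(X)_\RR)$ contains a positive-definite quadratic form. Both conditions will be transported from $F$ to $X$ along $f$ using the odd degree, working in cohomology with $\ZZ_{(2)}$, $\QQ$, and $\mathbb F_2$ coefficients. Once $X\cong G'/B'$ is known, a final comparison will identify $G'/B'$ with $F$.

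\emph{Step 1 (cohomology of $X$).} Since $X$ is Fano, it is simply connected and $H^1=H^2(\mathcal O_X)=0$. Hence $H^2(X,\ZZ)=\Pic(X)$ is free of rank $\rho(X)=\rho(F)$, and the same holds for $F$. By Poincar\'e duality, $\langle f^*a\cup b, [X]\rangle$ pairs with the transfer $f_!$, and $f_!f^*=\deg f\cdot\mathrm{id}$. Therefore $f^*:H^*(F,R)\to H^*(X,R)$ is injective for $R=\QQ$ and $R=\mathbb F_2$. In degree two the ranks agree, so $f^*:H^2(F,\QQ)\to H^2(X,\QQ)$ is an isomorphism. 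Over $\mathbb F_2$ it is injective between spaces of equal dimension, so it is an isomorphism; thus $f^*$ on $H^2(-,\ZZ)$ has odd index.

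\emph{Step 2 (quadratic relation).} On $F$ the Borel presentation gives the $W$-invariant Killing form $q\in\Sym^2H^2(F,\QQ)$, which is positive definite and maps to zero in $H^4(F,\QQ)$. Pulling back, $f^*q$ is a positive-definite quadratic form on $N^1(X)_\RR=H^2(X,\RR)$, via the identification of Step~1, and it vanishes in $H^4(X,\QQ)$. It then vanishes in $N^2(X)_\RR$ as well, since numerical classes of codimension two are detected by cohomology through pairing with curves and cycle classes.

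\emph{Step 3 (divisibility of $-K_X$).} This is the main obstacle. I would use Wu's formula: $w_2(X)=c_1(X)\bmod 2$ is the second Wu class $v_2$, characterized by $Sq^2(x)=v_2\cup x$ for $x\in H^{2n-2}(X,\mathbb F_2)$. For $F$, $c_1(F)=2\rho$, so $v_2(F)=0$ and $Sq^2$ vanishes into top degree on $F$. We need $Sq^2:H^{2n-2}(X,\mathbb F_2)\to H^{2n}$ to vanish too. Since $f^*$ is an isomorphism in top degree mod $2$ and is natural for $Sq^2$, it suffices that $f^*$ be surjective onto $H^{2n-2}(X,\mathbb F_2)$. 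This is dual to injectivity on $H^2$ mod $2$ together with equal ranks: $f_!$ is surjective on $H^{2n-2}$, and $f^*f_!$ differs from the identity only by projection along classes killed by pairing. A cleaner route is to argue with the pairing: for $x\in H^{2n-2}(X)$, choose $y\in H^2(F)$ with $f^*y$ pairing with $x$ the same way as the given class. I would make this precise using the degree-$2$ isomorphism mod $2$. Then $c_1(X)\equiv0\bmod 2$, and torsion-freeness of $H^2$ gives $-K_X=2L$ in $\Pic(X)$.

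\emph{Step 4 (identification).} The characterization now gives $X\cong G'/B'$ with $\dim G'/B'=\dim F$ and equal rank. Both are complete flags, and $f^*$ gives a lattice map between weight lattices $H^2(F,\ZZ)\to H^2(X,\ZZ)$ of odd index. This map is compatible with quadratic relations and with the integral cohomology rings rationally, so the rational cohomology rings are isomorphic and hence the root systems have the same Weyl group data up to $B_n/C_n$ ambiguity. The odd-index condition on the coroot lattices should distinguish $B_n$ from $C_n$, with index a power of two, forcing $G'/B'\cong F$. The homotopy-equivalence statement follows since such an equivalence has degree $\pm1$ and preserves $\rho$ and dimension.
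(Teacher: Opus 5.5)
Your outline follows the paper's route: odd index, transfer of the invariant quadratic relation, Wu's formula for $-K_X\in2\Pic(X)$, then a root-system comparison. Two steps have genuine gaps.

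First, your Wu-class argument depends on the Step~1 claim that $f^*:H^2(F,\mathbb F_2)\to H^2(X,\mathbb F_2)$ is injective ``between spaces of equal dimension''. You then dualize this to get $f^*$ onto $H^{2n-2}(X,\mathbb F_2)$. But by universal coefficients
\[
\dim H^2(X,\mathbb F_2)=\rho(X)+\dim_{\mathbb F_2}H^3(X,\ZZ)[2],
\]
and nothing excludes $2$-torsion in $H^3(X,\ZZ)$ before you know $X$ is a flag. Odd degree only makes $f^*$ split injective mod $2$. For test classes $y\in H^{2n-2}(X,\mathbb F_2)$ not pulled back from $F$, naturality of $\operatorname{Sq}^2$ says nothing about $\langle\operatorname{Sq}^2y,[X]_2\rangle$. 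The odd index of $f^*$ on $H^2(-,\ZZ)$ does survive, via the injections $H^2(-,\ZZ)/2\hookrightarrow H^2(-,\mathbb F_2)$. The missing idea is that $w_2(X)$ is the reduction of the \emph{integral} class $c_1(X)$. Odd index therefore lets you write $c_1(X)=f^*c_1(L)+2c_1(M)$ for some $L\in\Pic(F)$, $M\in\Pic(X)$, i.e.\ $w_2(X)=f^*\eta$ with $\eta=c_1(L)\bmod2$. Then for all $b\in H^{2n-2}(F,\mathbb F_2)$
\begin{align*}
\langle\eta\smile b,[F]_2\rangle
&=\langle w_2(X)\smile f^*b,[X]_2\rangle
=\langle\operatorname{Sq}^2(f^*b),[X]_2\rangle\\
&=\langle\operatorname{Sq}^2b,[F]_2\rangle
=\langle w_2(F)\smile b,[F]_2\rangle=0.
\end{align*}
So $\eta=0$ by duality on $F$, and only pulled-back test classes are ever needed.

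Second, Step~4 asserts rather than proves the identification. ``Isomorphic rational cohomology rings, hence the same Weyl group data up to $B/C$'' needs a way to read the root system off the ring. The paper uses Borel--Weil, the Weyl dimension formula and Riemann--Roch to factor the top intersection form on each flag $Y$: $D^N/N!=\prod_{\gamma>0}(D\cdot\gamma)/(H\cdot\gamma)$ over positive coroots, with $H=-K_Y/2$. Then $(f^*D)^N=(\deg f)\,D^N$ and unique factorization match coroot \emph{lines}, $A^\vee\gamma=b_\gamma\sigma(\gamma)$ with $b_\gamma\in\QQ^\times$. The Coxeter graph is recovered from the reflection arrangement by counting reflecting hyperplanes through intersections of two chamber walls.

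Your $B_m/C_m$ exclusion is only conjectured. ``Index a power of two'' is also not the right invariant, since $A^\vee$ may rescale individual coroots by arbitrary nonzero integers. What works is the following.
\begin{itemize}
\item Primitivity of coroots and oddness of $\det A$ force each $b_\gamma$ to be an odd integer.
\item Hence $A^\vee$ mod $2$ carries the mod-$2$ residues of positive coroots bijectively onto those of the target. To do this componentwise, first align chambers by a Weyl element so that $A^\vee$ is diagonal on simple coroots with odd entries.
\item These residue counts are $m(m+1)/2$ for $B_m$ and $m(m-1)+1$ for $C_m$, which differ for $m\ge3$.
\end{itemize}
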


Let $N^k(X)_\QQ$ denote codimension-$k$ cycles modulo numerical
equivalence. For each smooth projective variety $X$, put $r=\rho(X)$
and fix a basis $D_1,\ldots,D_r$ of $N^1(X)_\QQ$, also regarded as a
basis over $\RR$. We use this convention throughout, specifying any
alternative basis when needed. Define $R_X:=\ker\mu_X$, where
\[
\mu_X:\Sym^2N^1(X)_\QQ\longrightarrow N^2(X)_\QQ,
\qquad D\odot E\longmapsto D\cdot E.
\]
Here $\odot$ denotes multiplication in the symmetric algebra,
whereas $\cdot$ denotes the intersection product modulo numerical
equivalence. Each $Q\in\Sym^2N^1(X)_\RR$ has a unique expression
\[
Q=\sum_{i,j=1}^r q_{ij}D_i\odot D_j,
\qquad q_{ij}=q_{ji}.
\]
Via the intersection pairing $N^1(X)_\RR\simeq N_1(X)_\RR^*$,
this is the quadratic form
\[
Q(\gamma)=\sum_{i,j=1}^r q_{ij}(D_i\cdot\gamma)(D_j\cdot\gamma)
\quad(\gamma\in N_1(X)_\RR).
\]
We call $Q$ positive definite if the symmetric matrix $(q_{ij})$
is positive definite, equivalently if $Q(\gamma)>0$ for every
nonzero $\gamma\in N_1(X)_\RR$.
For a smooth Fano variety $X$, its pseudoindex $\iota_X$ is the
minimum anticanonical degree among rational curves on $X$.
Our second result characterizes complete flags numerically.

\begin{theorem}\label{thm:quadratic-main}
Let $X$ be a positive-dimensional smooth Fano variety. The following
are equivalent.
\begin{enumerate}[label=\textup{(\arabic*)}]
\item $X$ is a complete flag variety.
\item $\iota_X\geq2$ and $R_X\otimes_\QQ\RR$ contains a
positive-definite quadratic form.
\item $-K_X\in2\Pic(X)$ and $R_X\otimes_\QQ\RR$ contains a
positive-definite quadratic form.
\end{enumerate}
\end{theorem}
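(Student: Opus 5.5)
We prove (1)$\Rightarrow$(2),(3) directly and then reduce (2) and (3) to a single condition.

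\emph{(1)$\Rightarrow$(2),(3).} For $X=G/B$ we have $-K_X=2\rho$, where $\rho$ is the sum of the fundamental weights. Hence $-K_X\in2\Pic(X)$. Every rational curve has class a nonnegative combination of the Schubert curves $C_\alpha$, and each satisfies $-K_X\cdot C_\alpha=2$, so $\iota_X\ge2$.

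For the quadratic relation, use the Borel presentation $H^*(G/B,\QQ)\cong\Sym(\mathfrak h^*_\QQ)/I^W_+$. The $W$-invariant Killing form $\kappa\in\Sym^2\mathfrak h^*$ is a degree-two invariant, so it vanishes in $H^4$. Identify $N^1=\mathfrak h^*_\QQ$ via $\lambda\mapsto c_1(L_\lambda)$, and $N_1$ with the coroot space. Then $\kappa$ becomes the quadratic form $\gamma\mapsto\sum q_{ij}\langle\omega_i,\gamma\rangle\langle\omega_j,\gamma\rangle$, which is positive definite on the real coroot space. Since numerical and homological equivalence agree on $G/B$, $\kappa\in R_X$.

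\emph{(3)$\Rightarrow$(2).} If $-K_X=2L$, then every rational curve $C$ has $-K_X\cdot C=2(L\cdot C)\ge2$, because $L\cdot C>0$ for the ample class. So it suffices to prove (2)$\Rightarrow$(1).

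\emph{Key step for (2)$\Rightarrow$(1): a definite relation excludes surfaces in fibers.} Let $\varphi:X\to Y$ be the contraction of an extremal ray $R=\RR_{\ge0}[\ell]$. Suppose a fiber contains an irreducible surface $S$. Pair $Q\in R_X\otimes\RR$ with $[S]\in N_2(X)$. Because $Q$ maps to zero in $N^2$, we get
\[
0=\sum q_{ij}(D_i\cdot D_j\cdot S).
\]
On $S$, all curves are numerically proportional to $\ell$ in $X$, so each $D_i|_S$ is numerically a multiple of a fixed class: $D_i|_S\equiv (D_i\cdot\ell)\,H$ in $N^1(S)$ for a suitable $\QQ$-class $H$ with $H^2>0$. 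Take $H$ to be a restriction of an ample class normalized against $\ell$; intersection numbers on $S$ are determined by degrees on curves up to the common factor. This gives
\[
0=Q([\ell])\cdot H^2.
\]
Since $Q$ is positive definite, $Q([\ell])>0$, a contradiction.

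Justifying the proportionality $D_i|_S\equiv(D_i\cdot\ell)H$ is the delicate point. Every curve in $S$ has class in $R$ in $N_1(X)$, so the image of $N_1(S)\to N_1(X)$ is one-dimensional. Dually, the restriction of each $D_i$ is determined, as a functional on curves of $S$, by $D_i\cdot\ell$. One then needs Hodge index or a direct computation to see that $D_i\cdot D_j\cdot S$ is bilinear in these values. I would pass through the normalization of $S$ and write $D_i|_S=a_iA+N_i$, where $A$ is ample and each $N_i$ is numerically trivial on all curves of $S$, hence numerically zero on $S$. This yields $D_iD_jS=a_ia_jA^2$.

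Thus every fiber of every elementary contraction has dimension at most $1$.

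\emph{Conclusion.} By Wiśniewski/Ando-type results, an elementary contraction of a smooth variety with all fibers of dimension $\le1$ is either a smooth blow-down or a conic bundle. A divisorial blow-down with one-dimensional fibers contracts a divisor $E$ onto a codimension-two locus, whose fibers are curves $\ell$ with $-K_X\cdot\ell=1$. This contradicts $\iota_X\ge2$. Small contractions are excluded by Ionescu–Wiśniewski, since they need fibers of dimension $\ge2$. Any conic bundle with a singular fiber has a component of $-K_X$-degree $1$, again excluded. Hence every elementary contraction is a smooth $\PP^1$-fibration.

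The characterization of Occhetta–Solá Conde–Watanabe–Wiśniewski then applies: a Fano manifold all of whose elementary contractions are smooth $\PP^1$-fibrations is a complete flag $G/B$. The main obstacle is the surface-exclusion step, which requires the careful intersection theory on a possibly singular $S$. I expect the normalization argument above to handle it.
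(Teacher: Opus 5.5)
Your proposal is correct and follows the paper's proof. It uses the same chain $(1)\Rightarrow(2),(3)$, $(3)\Rightarrow(2)$, $(2)\Rightarrow(1)$, and its key step is exactly the paper's surface obstruction: your splitting $D_i|_S=a_iA+N_i$ is the paper's $D_i\equiv_f a_iM$, and the normalization detour is unnecessary because intersection numbers on $S$ depend only on numerical classes.

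The only differences are in the standard results you invoke afterwards. You use Ando's structure theorem plus $\iota_X\ge2$ where the paper uses Ionescu--Wi\'sniewski and H\"oring--Novelli. You also apply the main theorem of \cite{OSWW} to all elementary contractions, while the paper applies \cite[Theorem~A.1]{OSW} to $\rho(X)$ independent rays, which also yields the non-Fano Corollary~\ref{cor:fano-free}.
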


The key observation is that a positive-definite quadratic relation
rules out surfaces in the fibers of any contraction of relative Picard
number one (Proposition~\ref{prop:surface-obstruction}). When
$\iota_X\geq2$, standard contraction theorems yield smooth
$\PP^1$-fibrations, and the geometric characterizations in
\cite{OSWW,OSW} apply. Conversely, the quadratic relation on a
complete flag is a classical consequence of Borel's presentation of
its cohomology ring \cite{Borel}.

For Theorem~\ref{thm:odd-degree-main}, odd degree and Poincar\'e duality
give an odd-index embedding of Picard lattices; Wu's formula then shows
that $-K_X$ is divisible by two. Once $X$ is known to be a complete
flag, the Weyl dimension formula identifies the coroot lines from the top
intersection polynomial. Their integral structure modulo two distinguishes types $B_m$ and $C_m$.

\section{Quadratic relations and extremal contractions}\label{sec:mori}

Unless otherwise specified, we follow Hartshorne~\cite{Hartshorne}
and Koll\'ar--Mori~\cite{KM} for terminology and notation in algebraic
geometry; Hatcher~\cite{Hatcher} and Milnor--Stasheff~\cite{MilnorStasheff}
for algebraic topology; and Fulton--Harris~\cite{FultonHarris},
Bourbaki~\cite{Bourbaki}, and Humphreys~\cite{Humphreys} for complex
semisimple Lie algebras, root systems, and Weyl groups.

Throughout, all varieties are defined over $\CC$; homotopy equivalence and topological degree refer to the classical topology. A contraction is a projective surjective morphism to a normal variety with connected fibers. A smooth $\PP^1$-fibration means a smooth morphism whose fibers are isomorphic to $\PP^1$.

\begin{proposition}\label{prop:surface-obstruction}
Let $X$ be smooth projective, and assume that $R_X\otimes_\QQ\RR$ contains a positive-definite form. If $f:X\to Y$ is a contraction with $\rho(X/Y)=1$, then every fiber of $f$ has dimension at most one.
\end{proposition}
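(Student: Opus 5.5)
Suppose for contradiction that some fiber of $f$ contains an irreducible surface $S$. The plan is to intersect the positive-definite relation with $S$ and obtain a contradiction from the fact that $\rho(X/Y)=1$.

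\textbf{Step 1: intersect with $S$.} Let $Q=\sum q_{ij}D_i\odot D_j\in R_X\otimes_\QQ\RR$ be positive definite. Since $Q$ maps to zero in $N^2(X)_\RR$, we get
\[
\sum_{i,j} q_{ij}\,(D_i\cdot D_j\cdot S)=0,
\]
because intersecting with a surface is well defined on numerical classes of codimension two cycles.

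\textbf{Step 2: reduce to a single ray.} Since $\rho(X/Y)=1$, every curve contracted by $f$ has numerical class in one ray $\RR_{\ge0}\gamma$, where $\gamma\in N_1(X)_\RR$ is nonzero. For divisors $D,E$, restrict $D$ to $S$ (or to its normalization / a resolution $\nu:\tilde S\to S$) and compute $D\cdot E\cdot S=\nu^*D\cdot\nu^*E$ on $\tilde S$. Consider the linear map $N^1(X)\to N^1(\tilde S)$ given by $D\mapsto\nu^*D$, followed by pushing curve classes forward. Since every curve in $\tilde S$ maps into a fiber of $f$, $\nu_*N_1(\tilde S)\subset\RR\gamma$. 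It follows that $\nu^*D\equiv (D\cdot\gamma)\,h$ numerically on $\tilde S$ for a fixed class $h$, up to classes numerically trivial on $\tilde S$. To justify this, pick $H$ ample on $X$; then $D-\frac{D\cdot\gamma}{H\cdot\gamma}H$ is zero on $\gamma$, hence its pullback is numerically trivial on $\tilde S$. We may take $h=\nu^*H/(H\cdot\gamma)$.

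\textbf{Step 3: conclude.} Then
\[
D_i\cdot D_j\cdot S=(D_i\cdot\gamma)(D_j\cdot\gamma)\,h^2,
\qquad h^2=\frac{H^2\cdot S}{(H\cdot\gamma)^2}>0.
\]
Substituting into Step 1 gives $0=Q(\gamma)\,h^2$. Since $Q(\gamma)>0$ and $h^2>0$, this is a contradiction. Hence no fiber contains a surface, so every fiber has dimension at most one.

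\textbf{Main obstacle.} The delicate point is Step 2. We must justify that a divisor which is numerically trivial on the extremal ray has numerically trivial pullback to $\tilde S$. This holds because every curve in $\tilde S$ pushes forward to a curve in a fiber, whose class lies in $\RR_{\ge0}\gamma$. We also need $D_i\cdot D_j\cdot S$ to depend only on numerical classes; this follows from the projection formula on the resolution $\tilde S$.
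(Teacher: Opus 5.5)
Your argument is correct and is essentially the paper's proof. The paper writes $D_i\equiv_f a_iM$ for an $f$-ample $M$ and obtains $0=(\sum q_{ij}a_ia_j)\,M^2\cdot S>0$; this is your computation with $a_i$ proportional to $D_i\cdot\gamma$ and an ample $H$ in place of $M$, and passing to a resolution $\tilde S$ simply spells out the paper's one-line step $D_i|_S\equiv a_iM|_S$.
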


\begin{proof}
Write $Q=\sum_{i,j}q_{ij}D_i\odot D_j\in R_X\otimes_\QQ\RR$,
where $(q_{ij})$ is positive definite. Then
\[
 \sum_{i,j}q_{ij}D_i\cdot D_j=0\quad\text{in }N^2(X)_\RR.
\]
Let $M$ be an $f$-ample Cartier divisor. Since $N^1(X/Y)_\RR$
is one-dimensional, there are $a_i\in\RR$ such that
$D_i\equiv_f a_iM$. The vector $a=(a_i)$ is nonzero.

If a fiber contains an integral surface $S$, then
$D_i|_S\equiv a_iM|_S$ numerically. Hence
\[
 0=\sum_{i,j}q_{ij}(D_i\cdot D_j\cdot S)
 =\left(\sum_{i,j}q_{ij}a_ia_j\right)M^2\cdot S>0,
\]
a contradiction. Every projective variety of dimension at least two contains an integral surface.
\end{proof}

Compare \cite[Lemma~2.2]{WeberWisniewski}, which bounds fiber dimensions
by comparing fibrations on homeomorphic varieties. Here the only input
is a definite quadratic relation on $X$.

For a $K_X$-negative extremal ray $R$, let
\[
 \ell(R)=\min\{-K_X\cdot C:[C]\in R,\ C\text{ a rational curve}\}
\]
be its length.

\begin{proposition}\label{prop:P1-fibration}
Let $X$ be smooth projective and suppose that $R_X\otimes_\QQ\RR$ contains a positive-definite form. If a $K_X$-negative extremal ray $R$ has length at least two, then its contraction is a smooth $\PP^1$-fibration.
\end{proposition}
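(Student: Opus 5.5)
Let $\varphi_R:X\to Y$ denote the contraction of $R$, which exists by the cone and contraction theorem. It has relative Picard number $\rho(X/Y)=1$. Proposition~\ref{prop:surface-obstruction} then shows that every fiber of $\varphi_R$ has dimension at most one. In particular, $\varphi_R$ is not small. A small contraction would have an exceptional locus of dimension $\ge n/2+1$ through fibers, but it is more direct to use the Ionescu--Wiśniewski inequality. For an irreducible component $E$ of the exceptional locus and a component $F$ of a nontrivial fiber, it gives
\[
\dim E+\dim F\ \ge\ \dim X+\ell(R)-1\ \ge\ \dim X+1 .
\]
Since $\dim F=1$, we obtain $\dim E\ge \dim X$. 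So $E=X$ and $\varphi_R$ is of fiber type, with every fiber of dimension exactly one. Equidimensionality holds because all fibers are nonempty and positive-dimensional by upper semicontinuity, and bounded by one.

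So $\varphi_R$ is an equidimensional contraction of a smooth variety with one-dimensional fibers, relative Picard number one, and $-K_X$ relatively ample. I then invoke Ando's theorem \cite[Thm.~3.1]{Ando}, or equivalently the classification of contractions with one-dimensional fibers (Ando; Wiśniewski, see \cite{KM}). It says that such a contraction of an extremal ray, in the fiber-type case, is a conic bundle: $Y$ is smooth and $X\subset\PP(\mathcal E)$ embeds as a conic bundle over $Y$ via $-K_X$, with every fiber a plane conic.

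The remaining step, which I expect to be the main point, is to exclude singular fibers using $\ell(R)\ge2$. A singular conic fiber is either a pair of lines $C_1\cup C_2$ or a double line $2C_0$. In either case there is a rational curve $C$ with $-K_X\cdot C=1$. The whole fiber $C_{\mathrm{gen}}$ has $-K_X\cdot C_{\mathrm{gen}}=2$, and its components (with multiplicity) split this degree. All of these curves are numerically proportional, so $[C]\in R$, and this contradicts $\ell(R)\ge2$. Hence every fiber is a smooth conic, i.e.\ $\cong\PP^1$. Over the smooth base $Y$, a flat morphism is smooth once all its fibers are smooth. Flatness itself holds by miracle flatness, since $X$ is Cohen--Macaulay, $Y$ is smooth, and the fibers are equidimensional. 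Thus $\varphi_R$ is a smooth $\PP^1$-fibration.

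If one prefers to avoid quoting the full conic-bundle structure, there is an alternative for the last step. Take a general fiber $\ell\cong\PP^1$; it satisfies $-K_X\cdot\ell=2$. Then $\ell(R)\ge2$ forces all rational curves in fibers to have degree $\ge2$. Chow-theoretically, each fiber is a limit of general fibers, with anticanonical degree $2$. So each fiber is irreducible and reduced of degree $2$ with respect to a relatively ample class, which again yields a smooth $\PP^1$.
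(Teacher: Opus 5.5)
Your proof is correct. Its first half coincides with the paper's: Proposition~\ref{prop:surface-obstruction} bounds the fibers by dimension one, and the Ionescu--Wi\'sniewski inequality together with $\ell(R)\ge2$ rules out the birational case. So the contraction is of fiber type with all fibers one-dimensional.

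The two arguments differ only in the final structure theorem.
\begin{itemize}
\item The paper observes that a general fiber is $\PP^1$ of anticanonical degree $2$, so $\ell(R)=2$ is the maximal length (fiber dimension plus one). It then quotes the H\"oring--Novelli theorem on equidimensional contractions of maximal length \cite[Theorem~1.3]{HN}, which yields a $\PP^1$-bundle directly.
\item You quote Ando's conic bundle theorem and exclude singular fibers by hand. A reducible or double conic contains a line $C$ with $-K_X\cdot C=1$ and $[C]\in R$, which contradicts $\ell(R)\ge2$. Flatness, from the conic bundle structure or from miracle flatness, then upgrades ``all fibers smooth'' to smoothness of the morphism.
\end{itemize}
Your version uses an older, more classical input and shows exactly where the length hypothesis enters. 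In effect it is the one-dimensional case of H\"oring--Novelli deduced from Ando. The paper's version is a single citation and needs no information about the anticanonical embedding of the fibers.

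Three minor points. First, the aside that a small contraction would have an exceptional locus of dimension at least $n/2+1$ is unjustified as stated. It is also unnecessary, since the inequality already excludes every birational case. Second, Ando's theorem is not in Koll\'ar--Mori, so that pointer is doubtful. Third, your sketched alternative for the last step needs two further justifications. You need that fiber components are rational, so that the length bound applies to them. You also need flatness, so that an integral fiber has arithmetic genus $0$ and is therefore smooth. With these supplied, the alternative works as well.
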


\begin{proof}
Let $f:X\to Y$ be the contraction and $n=\dim X$. By Proposition~\ref{prop:surface-obstruction}, every fiber has dimension at most one. If $f$ were birational, the Ionescu--Wi\'sniewski inequality would give
\[
 \dim E+\dim F\geq n+\ell(R)-1\geq n+1
\]
for an exceptional component $E$ and a positive-dimensional fiber component $F$, contradicting $\dim E\leq n-1$ and $\dim F\leq1$; see
\cite[Theorem~1.1]{WisContractions} and the inequality recalled in
\cite[\S1.B]{HN}.

Thus $f$ is of fiber type, and all fibers have dimension one. A smooth general fiber is $\PP^1$, so $\ell(R)=2$. The equidimensional maximal-length contraction theorem
\cite[Theorem~1.3]{HN} then implies that $f$ is a smooth $\PP^1$-fibration.
\end{proof}

\begin{theorem}\label{thm:independent-rays}
Let $X$ be a positive-dimensional smooth projective variety with $\rho(X)=r$. Assume that $R_X\otimes_\QQ\RR$ contains a positive-definite form and that there are $r$ linearly independent $K_X$-negative extremal rays, each of length at least two. Then $X$ is a complete flag variety.
\end{theorem}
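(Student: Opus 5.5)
Apply Proposition~\ref{prop:P1-fibration} to each of the $r$ given rays $R_1,\dots,R_r$. This makes every elementary contraction $f_i:X\to Y_i$ a smooth $\PP^1$-fibration. We are then in the setting of the characterization of complete flag varieties by Occhetta--Sol\'a Conde--Watanabe--Wi\'sniewski \cite{OSWW}: a Fano manifold whose elementary contractions are all smooth $\PP^1$-fibrations is $G/B$. Since our hypotheses do not assume $X$ Fano, the first task is to show it. The plan has three steps.

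\emph{Step 1: $X$ is Fano and $\overline{NE}(X)$ is spanned by the $R_i$.} Let $C_i$ be a fiber of $f_i$, so $-K_X\cdot C_i=2$ and $[C_i]$ spans $R_i$. The $[C_i]$ form a basis of $N_1(X)_\RR$, because the rays are linearly independent and $\rho(X)=r$. I would show that the simplicial cone $\sum_i\RR_{\geq0}[C_i]$ is all of $\overline{NE}(X)$. Let $D_1,\dots,D_r$ be the dual basis, $D_i\cdot C_j=\delta_{ij}$. Each $R_j$ is an extremal ray of $\overline{NE}(X)$, and $\overline{NE}(X)$ contains the simplicial cone on the $R_j$.

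Suppose $\overline{NE}(X)$ were larger. Then some $D_i$ would be negative on a class $\gamma\in\overline{NE}(X)$. Since $D_i\cdot C_j=0$ for $j\neq i$, and the $f_j$ are smooth $\PP^1$-fibrations, one can ask whether $D_i$ is a pullback from $Y_j$ for every $j\neq i$; up to a multiple it is. I expect the cleaner route is the following. The curves $C_i$ are free rational curves covering $X$ (they are fibers of smooth morphisms), and the $f_i$ give $r$ independent families of unsplit covering rational curves. By the rationally chain connected argument with respect to these families (Koll\'ar--Mori--Miyaoka, \cite{KM}), $X$ is rationally connected, $N_1(X)$ is generated by the $[C_i]$, and $\overline{NE}(X)=\sum\RR_{\geq0}[C_i]$, as in the setup of \cite{OSWW}. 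Then $-K_X\cdot C_i=2>0$ on every generator, so $-K_X$ is ample by Kleiman's criterion, and $X$ is Fano.

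\emph{Step 2: all elementary contractions are smooth $\PP^1$-fibrations.} Once the cone is simplicial with extremal rays $R_1,\dots,R_r$, these are all the extremal rays. Each of them has a smooth $\PP^1$-fibration as its contraction, by Proposition~\ref{prop:P1-fibration}.

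\emph{Step 3: conclude.} Apply the main theorem of \cite{OSWW}, or of \cite{OSW} when $r$ is small or for the relevant refinement, to get $X\simeq G/B$. In the case $r=1$ this reads $X\simeq\PP^1$. That is consistent with the statement: $X$ is positive-dimensional, every fiber of the unique contraction to a point has dimension at most one, so $\dim X=1$ and $X$ is Fano, hence $\PP^1$.

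The main obstacle is Step 1: showing that no other extremal rays or non-$K$-negative part of the cone exist, which is where Fano-ness comes from. I would first try the rational chain connectedness argument with respect to the $r$ unsplit families of fibers, which expresses every curve class as a nonnegative combination of the $[C_i]$. If that is awkward, the fallback is to invoke the version of the \cite{OSWW} characterization formulated for smooth projective varieties carrying $r$ independent smooth $\PP^1$-fibrations, which yields Fano-ness as part of its conclusion.
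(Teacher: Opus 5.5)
Your first move (Proposition~\ref{prop:P1-fibration} applied to each $R_i$) and the fallback you mention at the end together make up the paper's entire proof. The paper notes that the $r=\rho(X)$ independent smooth $\PP^1$-fibrations satisfy the hypotheses of \cite[Appendix~A, Theorem~A.1]{OSW}, and concludes directly. It does not first prove that $X$ is Fano or determine the whole Mori cone; that information comes out of the cited theorem rather than going into it.

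Your primary route has a genuine gap in Step~1. For unsplit families, the rational-chain-connectedness argument only shows that every curve is numerically a linear combination of the $[C_i]$, with rational coefficients of unknown sign. Here the $[C_i]$ are already a basis of $N_1(X)_\RR$ by hypothesis, so this gives nothing new. In particular it does not give $\overline{NE}(X)=\sum_i\RR_{\ge0}[C_i]$. Without that equality you get neither ampleness of $-K_X$ via Kleiman nor your Step~2 claim that the $R_i$ are all the extremal rays.

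The elementary convexity information is also insufficient. For $A_i$ ample on $Y_i$, the class $f_i^*A_i$ is nef, vanishes on $C_i$, and is positive on $C_j$ for $j\neq i$. So $\operatorname{Nef}(X)$ meets the relative interior of every facet of the simplicial cone dual to the $[C_i]$. For $r=2$ this forces equality of the cones. For $r\ge3$ it does not: a convex cone can touch every facet of a simplicial cone without filling it (think of a triangle with its corners cut off). Closing this gap is exactly the nontrivial content of \cite[Appendix~A, Theorem~A.1]{OSW}. As written, your proof is complete only through the fallback citation, which is the paper's argument.
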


\begin{proof}
By Proposition~\ref{prop:P1-fibration}, the contractions of the
specified rays are smooth $\PP^1$-fibrations. These $r=\rho(X)$
independent contractions satisfy the hypotheses of
\cite[Appendix~A, Theorem~A.1]{OSW}, which gives the assertion.
\end{proof}

For relative flag-recognition results over more general bases, see \cite{GSB}.

We record the relation needed on a flag variety. Let $F=G/B$ and
replace $G$ by its simply connected cover, which does not change $F$.
Let $P_i\supset B$ be the standard parabolic subgroup associated with the
simple root $\alpha_i$. Write $\pi_i:F\to G/P_i$ for the corresponding
projections, $\Gamma_i\simeq\PP^1$ for their fibers, and $K_i$ for
the relative canonical divisors. As in \cite[Sections~2.2--2.3]{OSWW},
$-K_i$ are the simple roots of $\mathfrak g=\operatorname{Lie}(G)$,
and the reflections
\[
 r_i(D)=D+(D\cdot\Gamma_i)K_i
\]
generate its Weyl group $W$. The classes $[\Gamma_i]$ form a basis of
$\Hom(\Pic(F),\ZZ)$, dual to the fundamental line bundles $L_i$ with
$L_i\cdot\Gamma_j=\delta_{ij}$. Put $S=\Sym(\Pic(F)_\QQ)$, and write $S^W_+$ for the subspace of
positive-degree elements of the invariant ring $S^W$. The characteristic
homomorphism $S\to A^*(F)_\QQ$, sending a line bundle to its divisor
class, is surjective and has kernel the ideal $S\cdot S^W_+$. This follows
from Borel's rational cohomology presentation and the identification of the
rational Chow ring with even-degree cohomology via the Bruhat cell
decomposition; see \cite{Borel,Demazure}. Thus
\begin{equation}\label{eq:Chow-coinvariant}
 A^*(F)_\QQ\simeq S/(S\cdot S^W_+).
\end{equation}

The following classical quadratic relation follows from Borel's
presentation \cite{Borel}. A Weyl-invariant quadratic form is also used
in \cite[\S2.5]{WeberWisniewski}.

\begin{lemma}\label{lem:flag-positive}
A complete flag variety has a rational positive-definite quadratic relation in $N^2(F)_\QQ$. Moreover
\[
 -K_F\in2\Pic(F),\qquad \iota_F=2.
\]
\end{lemma}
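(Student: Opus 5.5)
Since $F$ has no odd cohomology and the Chow ring is generated by divisors, we may read off everything from the coinvariant presentation \eqref{eq:Chow-coinvariant}. The plan is to take a $W$-invariant positive-definite quadratic form, show it lies in $R_F$, and then use the root system to compute $-K_F$ and $\iota_F$.

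\textbf{Step 1 (the quadratic relation).} The Weyl group $W$ acts on $\Pic(F)_\QQ$ through the reflections $r_i$. Since $W$ is finite, averaging any rational positive-definite form over $W$ gives a $W$-invariant rational positive-definite form; concretely, one may take the Killing form restricted to the weight lattice. Viewed as an element $Q\in S^2=\Sym^2\Pic(F)_\QQ$, this gives $Q\in S^W_+$. By \eqref{eq:Chow-coinvariant}, $Q$ maps to zero in $A^2(F)_\QQ$, hence also in $N^2(F)_\QQ$, since numerical equivalence is coarser than rational equivalence. So $Q\in R_F$.

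One detail needs care: positivity is defined via the matrix $(q_{ij})$ in the basis $D_i$, that is, as a form on $N_1(F)=N^1(F)^*$. We must therefore transport the form on $N^1$ to a form on $N_1$ before calling it an element of $\Sym^2 N^1$. The fix is to start from a $W$-invariant positive-definite form on $N_1(F)_\QQ$. Such a form exists by the same averaging argument, because $W$ acts dually on $N_1$. Its coefficient matrix is then an element of $\Sym^2N^1$ that is $W$-invariant, since the dual action preserves the invariance. The rationality of the averaged form is clear.

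\textbf{Step 2 ($-K_F\in 2\Pic(F)$).} The anticanonical class of $G/B$ is the sum of the positive roots, which equals $2\rho$, where $\rho=\sum_i L_i$ is the sum of the fundamental weights. We can justify this via $T_F\simeq\mathfrak g/\mathfrak b$, whose determinant has weight $\sum_{\alpha>0}\alpha=2\rho$. Alternatively, in the paper's language: $-K_F\cdot\Gamma_i=2$ for every $i$, because the restriction of $-K_F$ to the fiber $\Gamma_i\simeq\PP^1$ of the smooth morphism $\pi_i$ is $-K_{\Gamma_i}$ plus a trivial normal-bundle contribution. Since the $[\Gamma_i]$ form a basis dual to the $L_i$, we get $-K_F=2\sum_iL_i$.

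\textbf{Step 3 ($\iota_F=2$).} We have $-K_F\cdot\Gamma_i=2$, so $\iota_F\le 2$. For the reverse inequality, every curve class $[C]$ is a nonnegative integer combination $\sum c_i[\Gamma_i]$, because the $L_i$ are globally generated and so $L_i\cdot C\ge0$. Hence
\[
-K_F\cdot C=2\sum_i c_i\ge 2
\]
for every curve. This gives $\iota_F=2$.

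The only step with real content is Step 1, and the main thing to get right there is the dual bookkeeping described above. Steps 2 and 3 are standard.
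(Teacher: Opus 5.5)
Your proposal is correct and follows the paper's proof. The paper averages $\sum_i L_i\odot L_i$ over $W$ directly inside $\Sym^2N^1(F)_\QQ$, which settles your dual bookkeeping automatically; it kills this invariant form via \eqref{eq:Chow-coinvariant}, gets $-K_F=2\sum_iL_i$ from $-K_F\cdot\Gamma_i=2$ and duality, and obtains $\iota_F=2$ from ampleness of $\sum_iL_i$. In your Step~3, the inequality $\sum_ic_i\ge1$ tacitly uses $[C]\neq0$, which follows from that same ampleness.
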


\begin{proof}
We retain the notation $L_i$ and $\Gamma_i$ introduced above.
Put
\[
 Q=\sum_{w\in W}w\!\left(\sum_i L_i\odot L_i\right)
 \in\Sym^2N^1(F)_\QQ.
\]
This is positive definite and $W$-invariant, so its image in $A^2(F)_\QQ$
vanishes by \eqref{eq:Chow-coinvariant}, hence it vanishes in $N^2(F)_\QQ$.

Since $-K_F\cdot\Gamma_i=2$ for every $i$, the dual-basis property gives
\[
 -K_F=2\sum_iL_i.
\]
The sum is ample, while each $\Gamma_i$ is a rational curve of anticanonical
degree two. Thus $\iota_F=2$.
\end{proof}

\begin{proof}[Proof of Theorem~\ref{thm:quadratic-main}]
Lemma~\ref{lem:flag-positive} gives \textup{(1)}$\Rightarrow$\textup{(2)},\textup{(3)}. If \textup{(3)} holds, every anticanonical degree of a rational curve is a positive even integer, so $\iota_X\geq2$.

Assume \textup{(2)}. The Mori cone of a smooth Fano variety is a full rational polyhedral cone. Choose $\rho(X)$ linearly independent extremal rays. Each has length at least $\iota_X\geq2$, so Theorem~\ref{thm:independent-rays} applies.
\end{proof}

\begin{corollary}\label{cor:fano-free}
Let $X$ be a positive-dimensional smooth projective variety. Then $X$ is a complete flag variety if and only if
\begin{enumerate}[label=\textup{(\roman*)}]
\item $R_X\otimes_\QQ\RR$ contains a positive-definite quadratic form;
\item $-K_X\in2\Pic(X)$;
\item the $K_X$-negative extremal rays span $N_1(X)_\RR$.
\end{enumerate}
\end{corollary}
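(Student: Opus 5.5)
The forward direction follows from Lemma~\ref{lem:flag-positive}, and the converse is a reduction to Theorem~\ref{thm:independent-rays}.

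\emph{Forward direction.} If $X\simeq F=G/B$, then (i) and (ii) hold by Lemma~\ref{lem:flag-positive}. For (iii), recall that $F$ is Fano (indeed $-K_F=2\sum_iL_i$ is ample). Its Mori cone is therefore spanned by $K_F$-negative extremal rays. Concretely, the $\rho(F)$ classes $[\Gamma_i]$ span extremal rays, and they form a basis of $N_1(F)_\RR$. So (iii) holds.

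\emph{Converse.} Assume (i)--(iii).

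\emph{Step 1.} By (iii), choose $r=\rho(X)$ linearly independent $K_X$-negative extremal rays $R_1,\dots,R_r$. This is possible because the rays span $N_1(X)_\RR$, so some $r$ of them form a basis.

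\emph{Step 2.} Show that each $R_k$ has length at least two. By the cone theorem, each $K_X$-negative extremal ray is spanned by a rational curve $C$ with $0<-K_X\cdot C$. Write $-K_X=2L$ with $L\in\Pic(X)$, using (ii). Then for every rational curve $C$ with $[C]\in R_k$,
\[
-K_X\cdot C=2(L\cdot C)\in 2\ZZ,
\]
and this quantity is positive because $R_k$ is $K_X$-negative. Hence $-K_X\cdot C\geq 2$, so $\ell(R_k)\geq2$.

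\emph{Step 3.} Theorem~\ref{thm:independent-rays} now applies directly: hypothesis (i) supplies the positive-definite relation, and Steps 1--2 supply $r$ independent $K_X$-negative rays of length at least two. It concludes that $X$ is a complete flag variety. Note that no Fano assumption is used at this point. Theorem~\ref{thm:independent-rays} and Proposition~\ref{prop:P1-fibration} only need smooth projective $X$ together with the ray hypotheses, and flag varieties are Fano a posteriori.

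\emph{Main obstacle.} The only delicate point is making sure nothing in the chain silently uses that $X$ is Fano, in contrast with the proof of Theorem~\ref{thm:quadratic-main}, which used that the Mori cone is polyhedral. The ray-length bound in Step 2 and the contraction theorems cited in Proposition~\ref{prop:P1-fibration} (Ionescu--Wi\'sniewski and \cite{HN}) are stated for individual $K_X$-negative extremal rays on smooth projective varieties. The input from \cite[Theorem~A.1]{OSW} requires only $\rho(X)$ independent smooth $\PP^1$-fibrations. So hypothesis (iii) is precisely the substitute for the Fano condition. If some cited result did need a Fano hypothesis, the fallback would be to show $-K_X$ is ample via Kleiman's criterion: (iii) together with the fibrations would give that $-K_X$ is positive on the whole cone. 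Since the spanning rays are generated by $\Gamma$-fibres of degree $2$, the cone is exactly their span.
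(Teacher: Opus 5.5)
Your proposal is correct and matches the paper's proof: necessity comes from Lemma~\ref{lem:flag-positive} plus the Fano property of $F$, and sufficiency comes from choosing $\rho(X)$ independent $K_X$-negative extremal rays via (iii), using (ii) to get even positive length $\geq 2$, and applying Theorem~\ref{thm:independent-rays}. Your closing ``fallback'' is unnecessary, and its claim that the cone equals the span of those rays does not follow from (iii) alone, which gives only a linear span of $N_1(X)_\RR$. Since the main argument never needs it, this does not affect the proof.
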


\begin{proof}
The necessity follows from Lemma~\ref{lem:flag-positive} and the Fano property of a complete flag. Conversely, choose $\rho(X)$ independent $K_X$-negative extremal rays. By \textup{(ii)}, every such ray has even positive length, hence length at least two. Apply Theorem~\ref{thm:independent-rays}.
\end{proof}

\begin{remark}\label{rem:sharpness}
Neither condition \textup{(ii)} nor condition \textup{(iii)} of Corollary~\ref{cor:fano-free} can be omitted. On $\mathbb F_1=\operatorname{Bl}_p\PP^2$, with $H$ the pullback of a line and $E$ the exceptional curve, one has $H^2+E^2=0$ in $N^2(\mathbb F_1)_\QQ$, so
$H\odot H+E\odot E\in R_{\mathbb F_1}$ is positive definite, but $-K_{\mathbb F_1}\cdot E=1$ and $\mathbb F_1$ is not a complete flag.

Likewise, weak Fano positivity does not replace condition \textup{(iii)} of Corollary~\ref{cor:fano-free}. On $\mathbb F_2$, with negative section $S$ and ruling fiber $F$, put $u=S+F$ and $v=F$. Then $u^2=v^2=0$ in $N^2(\mathbb F_2)_\QQ$, and
$u\odot u+v\odot v\in R_{\mathbb F_2}$ is positive definite, while
\[
 -K_{\mathbb F_2}=2(u+v)
\]
is nef and big. However $-K_{\mathbb F_2}\cdot S=0$, and the $K_X$-negative extremal rays do not span $N_1(X)_\RR$.
\end{remark}

\section{Rigidity from maps of odd degree}\label{sec:odd-degree}

For a smooth Fano variety, Kodaira vanishing, the exponential sequence,
and GAGA give
\begin{equation}\label{eq:Pic-H2-Fano}
 \Pic(X)\xrightarrow{\sim}H^2(X,\ZZ).
\end{equation}
Put $\mathbb F_2:=\ZZ/2\ZZ$, the field with two elements.
For any space $Y$, ``$\bmod 2$'' on integral cohomology means the
coefficient-reduction map
$\operatorname{red}_2:H^k(Y,\ZZ)\to H^k(Y,\mathbb F_2)$
induced by $\ZZ\to\mathbb F_2$. For a closed oriented manifold $Y$,
we similarly write $[Y]_2$ for the image of its fundamental class
in homology with $\mathbb F_2$ coefficients.
For the argument with continuous maps, we use Poincar\'e duality
\cite[Theorem~3.30, p.~241]{Hatcher}, the defining identity for Wu
classes and Wu's formula \cite[\S11, Lemma~11.13 and Theorem~11.14]{MilnorStasheff}.
For a complex manifold $Y$, let $w_2(Y)\in H^2(Y,\mathbb F_2)$
denote the \emph{second Stiefel--Whitney class} of its underlying real
tangent bundle. The identity
$w_2(Y)=\operatorname{red}_2(c_1(T_Y))
=\operatorname{red}_2(c_1(-K_Y))$
follows from the corresponding identity for complex vector bundles
\cite[\S14, Problem~14-B]{MilnorStasheff}.

\begin{corollary}\label{cor:truncated}
Let $X$ be a positive-dimensional smooth Fano variety. Suppose that
\[
 \ker\!\left(\Sym^2H^2(X,\RR)\longrightarrow H^4(X,\RR)\right)
\]
contains a positive-definite quadratic form. If any one of the following
holds, then $X$ is a complete flag variety:
\begin{enumerate}[label=\textup{(\roman*)}]
\item $-K_X\in2\Pic(X)$;
\item the second Stiefel--Whitney class $w_2(X)$ vanishes;
\item $\iota_X\geq2$.
\end{enumerate}
\end{corollary}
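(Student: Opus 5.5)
The plan is to reduce Corollary~\ref{cor:truncated} to Theorem~\ref{thm:quadratic-main}. First I translate the cohomological hypothesis into the numerical one. Then I show that each of (i), (ii), (iii) yields condition (2) or (3) of that theorem.

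For the translation, use \eqref{eq:Pic-H2-Fano}. Since $X$ is Fano, $H^1(X,\mathcal O_X)=H^2(X,\mathcal O_X)=0$. Hence $H^2(X,\RR)=\Pic(X)\otimes\RR=N^1(X)_\RR$, because numerical and homological equivalence agree for divisors here ($\Pic(X)$ is torsion-free modulo torsion, and $H^2$ is spanned by algebraic classes).

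The cycle class map $A^2(X)_\RR\to H^4(X,\RR)$ composed with the intersection pairing factors through $N^2(X)_\RR$. If $\sum q_{ij}D_i\cdot D_j=0$ in $H^4(X,\RR)$, then its intersection with every curve class, and indeed with every codimension-$(n-2)$ cycle, vanishes. So it is numerically trivial, i.e.\ it is zero in $N^2(X)_\RR$. Thus the kernel in the hypothesis is contained in $R_X\otimes_\QQ\RR$, and a positive-definite form in it gives one in $R_X\otimes_\QQ\RR$. The definition of positive definiteness is basis-independent, so the identification $H^2(X,\RR)=N^1(X)_\RR$ preserves it.

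With this in hand, case (i) is condition (3) of Theorem~\ref{thm:quadratic-main}, and case (iii) is condition (2). For case (ii), I use $w_2(X)=\operatorname{red}_2(c_1(-K_X))$. Vanishing of $w_2(X)$ means $c_1(-K_X)$ lies in the kernel of $H^2(X,\ZZ)\to H^2(X,\mathbb F_2)$. By the Bockstein sequence for $0\to\ZZ\xrightarrow{2}\ZZ\to\mathbb F_2\to0$, that kernel is $2H^2(X,\ZZ)$. Via \eqref{eq:Pic-H2-Fano} this gives $-K_X\in2\Pic(X)$, which reduces case (ii) to case (i).

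The only mildly delicate point is the first translation step. The map $H^4$-vanishing $\Rightarrow$ $N^2$-vanishing must go the right direction, which it does, because numerical triviality is detected by cohomological pairing. Everything else is a formal reduction to Theorem~\ref{thm:quadratic-main}.
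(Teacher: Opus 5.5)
Your proposal is correct and follows the paper's own proof essentially step for step. You identify $H^2(X,\RR)$ with $N^1(X)_\RR$ via \eqref{eq:Pic-H2-Fano}, use that vanishing in $H^4(X,\RR)$ implies vanishing in $N^2(X)_\RR$, reduce (ii) to (i) through the Bockstein sequence and $w_2(X)=\operatorname{red}_2(c_1(-K_X))$, and then apply Theorem~\ref{thm:quadratic-main}. One harmless slip: a codimension-two class is tested against dimension-two cycles, not against curve classes, which is what your own follow-up clause about codimension-$(n-2)$ cycles correctly says.
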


\begin{proof}
All real degree-two cohomology classes of a smooth Fano variety are
divisor classes by \eqref{eq:Pic-H2-Fano}. Homological equivalence
implies numerical equivalence. Consider the short exact sequence of
coefficient groups
\begin{equation}\label{eq:mod-two-coeff}
0\longrightarrow\ZZ\xrightarrow{\times2}\ZZ
\longrightarrow\mathbb F_2\longrightarrow0.
\end{equation}
The associated long exact cohomology sequence contains
\begin{equation}\label{eq:mod-two-long}
H^2(X,\ZZ)\xrightarrow{\times2}H^2(X,\ZZ)
\xrightarrow{\operatorname{red}_2}H^2(X,\mathbb F_2)
\xrightarrow{\beta}H^3(X,\ZZ),
\end{equation}
where $\beta$ is the connecting (Bockstein) homomorphism.
Exactness gives $\ker(\operatorname{red}_2)=2H^2(X,\ZZ)$.
Since $w_2(X)=\operatorname{red}_2(c_1(-K_X))$, the isomorphism
\eqref{eq:Pic-H2-Fano} shows that conditions \textup{(i)} and
\textup{(ii)} are equivalent. Theorem~\ref{thm:quadratic-main} applies.
\end{proof}

\begin{proof}[Proof of Theorem~\ref{thm:odd-degree-main}]
Set $N=\dim_{\CC}X=\dim_{\CC}F$, $r=\rho(X)=\rho(F)$,
and $d=\deg f\in2\ZZ+1$.  The assertion is immediate for $N=0$,
so assume $N>0$.  We distinguish the topological, intersection-theoretic,
and root-theoretic steps.

\smallskip\noindent
\emph{Step 1: the integral Picard lattices and their odd index.}
For any smooth Fano variety $Y$, the first Chern class gives
$\Pic(Y)\simeq H^2(Y,\ZZ)\simeq\ZZ^{\rho(Y)}$;
see \cite[Chapter~2, Proposition~2.1.2]{IP}.
Since $f$ is only continuous, its action on Picard groups means the
homomorphism
\[
 A:=c_1^{-1}\circ f^*\circ c_1:\Pic(F)\longrightarrow\Pic(X),
\]
not an algebraic pullback of divisors.

We first recall why a map of nonzero degree induces injections on
cohomology with field coefficients.  If $k$ is a field in which
$d\ne0$, and $a\in H^j(F,k)$ satisfies $f^*a=0$, then for any
$b\in H^{2N-j}(F,k)$ the naturality of the cup product and of
evaluation gives
\[
 d\langle a\smile b,[F]_k\rangle
 =\langle f^*(a\smile b),[X]_k\rangle
 =\langle f^*a\smile f^*b,[X]_k\rangle=0.
\]
The Poincar\'e pairing over $k$ is nondegenerate
\cite[Theorem~3.30]{Hatcher}, hence $a=0$.  As $d$ is odd,
we may take both $k=\QQ$ and $k=\mathbb F_2$.
The rational injection in degree two, together with $\rho(X)=\rho(F)$,
shows that $A\otimes\QQ$ is an isomorphism.  Thus $A$ is an
injection between free integral lattices of the same rank,
and its finite index equals $|\det A|$ in integral bases.

It remains to show that this index is odd. Applied to $Y=X,F$,
the coefficient short exact sequence \eqref{eq:mod-two-coeff}
gives, by exactness, natural injections
\[
 j_Y:\Pic(Y)/2\Pic(Y)
 \simeq H^2(Y,\ZZ)/2H^2(Y,\ZZ)
 \hookrightarrow H^2(Y,\mathbb F_2),\qquad Y=X,F.
\]
Let
\[
 \overline A:\Pic(F)/2\Pic(F)\longrightarrow\Pic(X)/2\Pic(X),
 \qquad [L]\longmapsto[A(L)],
\]
be the map induced by $A$.
By the definition of $A$ and the naturality of reduction of
coefficients, the following diagram commutes:
\[
\begin{CD}
\Pic(F)/2\Pic(F) @>{\overline A}>> \Pic(X)/2\Pic(X) \\
@V{j_F}VV @VV{j_X}V \\
H^2(F,\mathbb F_2) @>{f^*}>> H^2(X,\mathbb F_2).
\end{CD}
\]
Consequently,
if $\overline A(v)=0$ for
$v\in\Pic(F)/2\Pic(F)$, then
\[
 f^*j_F(v)=j_X\overline A(v)=0.
\]
The injection of $f^*$ on $H^2(-,\mathbb F_2)$ and that of $j_F$
force $v=0$.  Thus $\overline A$ is injective.  Its source and
target are both $r$-dimensional $\mathbb F_2$-spaces, so it is
an isomorphism. Since $\overline A$ is induced by $A$ modulo $2$,
its injectivity implies that
\[
 {\det A\equiv1\pmod2,\qquad
 [\Pic(X):A\Pic(F)]=|\det A|\text{ is odd}.}
\]

\smallskip\noindent
\emph{Step 2: the second Stiefel--Whitney class.}
For a complex manifold $Y$, its real tangent bundle is the
underlying real bundle of $T_Y$, so
$w_2(Y)=c_1(T_Y)\bmod2=c_1(-K_Y)\bmod2$
\cite[\S14, Problem~14-B]{MilnorStasheff}.
Because $\overline A$ is surjective, one may choose
$L\in\Pic(F)$ and $M\in\Pic(X)$ such that
$-K_X=A(L)+2M$ in additive notation.  Set
$\eta:=c_1(L)\bmod2\in H^2(F,\mathbb F_2)$.
The definition of $A$ and the compatibility of reduction with
$f^*$ give
\[
 f^*\eta=\bigl(c_1(A(L))\bigr)\bmod2
 =c_1(-K_X)\bmod2=w_2(X).
\]
Here is the precise form of Wu's theorem that we need.
For an oriented closed real $2N$-manifold $M$, the Wu classes
$v_i(M)$ are characterized by
$\langle v_i(M)\smile a,[M]_2\rangle
 =\langle\operatorname{Sq}^i(a),[M]_2\rangle$
for $a\in H^{2N-i}(M,\mathbb F_2)$.
The identity $w=\operatorname{Sq}(v)$ gives
$w_2=v_2+\operatorname{Sq}^1(v_1)=v_2+w_1^2$.
Since an oriented manifold has $w_1=0$, we obtain
\begin{equation}\label{eq:wu-for-paper}
 \langle w_2(M)\smile a,[M]_2\rangle
 =\langle\operatorname{Sq}^2(a),[M]_2\rangle
 \quad(a\in H^{2N-2}(M,\mathbb F_2)).
\end{equation}
These are the defining property of the Wu classes and Wu's formula
\cite[\S11, Lemma~11.13 and Theorem~11.14]{MilnorStasheff}.
Unlike tangent bundles, Steenrod squares are natural under
\emph{every} continuous map:
$\operatorname{Sq}^2(f^*b)=f^*(\operatorname{Sq}^2b)$
\cite[\S4.L]{Hatcher}.

For arbitrary $b\in H^{2N-2}(F,\mathbb F_2)$ we now compute
\begin{align*}
 \langle\eta\smile b,[F]_2\rangle
 &=\langle f^*(\eta\smile b),[X]_2\rangle\\
 &=\langle w_2(X)\smile f^*b,[X]_2\rangle\\
 &=\langle\operatorname{Sq}^2(f^*b),[X]_2\rangle\\
 &=\langle f^*(\operatorname{Sq}^2b),[X]_2\rangle\\
 &=\langle\operatorname{Sq}^2b,[F]_2\rangle\\
 &=\langle w_2(F)\smile b,[F]_2\rangle=0.
\end{align*}
The first and fifth equalities follow from
$f_*[X]_2=[F]_2$, because $d\equiv1\pmod2$;
the second uses $f^*\eta=w_2(X)$ and multiplicativity of
$f^*$; the third and sixth are \eqref{eq:wu-for-paper};
and the fourth is naturality of $\operatorname{Sq}^2$.
Finally, $w_2(F)=0$ follows from
$-K_F=2\sum_iL_i$ (Lemma~\ref{lem:flag-positive}).
Since this holds for every $b$, the perfect Poincar\'e pairing
on $F$ forces $\eta=0$, and consequently $w_2(X)=0$.
By exactness of \eqref{eq:mod-two-long}, the kernel of
$\operatorname{red}_2:H^2(X,\ZZ)\to H^2(X,\mathbb F_2)$ is
$2H^2(X,\ZZ)$.
Applying the isomorphism $c_1:\Pic(X)\simeq H^2(X,\ZZ)$ gives
\[
 {-K_X\in2\Pic(X).}
\]

\smallskip\noindent
\emph{Step 3: transfer of the quadratic relation.}
Use the fundamental line bundles $L_1,\ldots,L_r$ as a rational
basis of $N^1(F)_\QQ$. By Lemma~\ref{lem:flag-positive}, there is
a positive-definite quadratic form
$Q_F=\sum_{i,j}q_{ij}L_i\odot L_j$ with $(q_{ij})$ symmetric,
whose corresponding cup product in $H^4(F,\QQ)$ vanishes. Put
$Q_X=\sum_{i,j}q_{ij}A(L_i)\odot A(L_j)$.
The corresponding cup product on $X$ also vanishes:
\[
 \sum_{i,j}q_{ij}c_1(A(L_i))\smile c_1(A(L_j))
 =f^*\!\left(\sum_{i,j}q_{ij}c_1(L_i)\smile c_1(L_j)\right)=0.
\]
Since homological equivalence implies numerical equivalence,
the corresponding relation also holds in $N^2(X)_\QQ$
\cite[Chapter~19]{Fulton}.
Thus $\mu_X(Q_X)=0$, i.e., $Q_X\in R_X\otimes_\QQ\RR$.
Since $A\otimes\RR$ is invertible, $A(L_1),\ldots,A(L_r)$
is a real basis; in that basis the coefficient matrix of $Q_X$
is $(q_{ij})$, so $Q_X$ remains positive definite.
Together with Step~2, Theorem~\ref{thm:quadratic-main}
shows that $X$ is a complete flag variety.

\Needspace{5\baselineskip}
\smallskip\noindent
\emph{Step 4: the coroot factors of the intersection polynomial.}
Write $F=F_\Phi$ and $X=F_\Psi$.
We use the term \emph{root system} in the sense of
\cite[\S9.2]{Humphreys}: the definition already requires that
only $\alpha$ and $-\alpha$ occur on the line of each root
and that all Cartan integers are integral. As in that reference,
$\Theta^\vee$ denotes the dual (or inverse) root system of
$\Theta$; its roots will be called \emph{coroots}.
For either flag $Y=F_\Theta$, write
$P_\Theta:=\Pic(Y)$ for its weight lattice and
$Q_\Theta^\vee:=\Hom(P_\Theta,\ZZ)$ for its coroot lattice.
In \cite[\S13.1]{Humphreys}, the weight lattice is denoted
$\Lambda$; in this paper the symbol $Q_\Theta^\vee$ is used
for the coroot lattice to avoid confusion.
Write $\mathcal C_Y^+$ for the positive coroots. The classes
of the standard $\PP^1$-fibers $[\Gamma_i]$ form an integral
basis of $Q_\Theta^\vee$. The other coroots are their
Weyl-group translates; see \cite[\S\S2.2--2.3]{OSWW}.
Put $H_Y=-K_Y/2$. We derive the factorization from the
Borel--Weil theorem and the Weyl dimension formula; see
\cite[Theorems~1--2]{LurieBWB} and
\cite[Corollary~24.6, p.~402]{FultonHarris}.
Passing to the simply connected cover of the defining group does
not change $Y$.  Its Picard lattice is identified with the weight
lattice, and a dominant integral weight $\lambda$ corresponds to a
globally generated line bundle $L_\lambda$, with
$H^0(Y,L_\lambda)\simeq V_\lambda^*$ by Borel--Weil.
Here $V_\lambda$ is the irreducible representation of highest
weight $\lambda$.  Write
$\varrho=\frac12\sum_{\alpha\in\Theta^+}\alpha
=\sum_i\omega_i$, where the $\omega_i$ are the fundamental weights.
Under the weight--Picard correspondence, $H_Y$ corresponds to
$\varrho$, since $-K_Y=2\sum_i L_i$.

Suppose first that $D=L_\lambda$ for a dominant integral
weight $\lambda$.  Since $mD$ is nef and $-K_Y$ is ample,
$mD-K_Y$ is ample for every $m\ge0$.  Kodaira vanishing gives
$H^j(Y,\mathcal O_Y(mD))=0$ for $j>0$.  Hence Borel--Weil
and the Weyl dimension formula imply
\[
\begin{aligned}
 \chi(Y,\mathcal O_Y(mD))
 &=\dim V_{m\lambda}\\
 &=\prod_{\alpha\in\Theta^+}
 \frac{\alpha^\vee(m\lambda+\varrho)}
      {\alpha^\vee(\varrho)}\\
 &=\prod_{\gamma\in\mathcal C_Y^+}
 \left(1+m\frac{D\cdot\gamma}{H_Y\cdot\gamma}\right).
\end{aligned}
\]
Here a coroot is regarded as a linear functional on the weight space.
The last equality uses the natural weight--coroot pairing, written as
evaluation: $\gamma(\lambda)=D\cdot\gamma$ and
$\gamma(\varrho)=H_Y\cdot\gamma$.
There are exactly $N=\dim Y$ positive roots.  Comparing the
coefficient of $m^N$ with the leading term
$\chi(Y,\mathcal O_Y(mD))=(D^N/N!)m^N+O(m^{N-1})$
from Hirzebruch--Riemann--Roch gives
\begin{equation}\label{eq:Weyl-volume}
 \frac{D^N}{N!}
 =\prod_{\gamma\in\mathcal C_Y^+}
 \frac{D\cdot\gamma}{H_Y\cdot\gamma}.
\end{equation}
Writing $D=\sum_{i=1}^r a_iL_i$ in the basis of fundamental
line bundles, both sides of \eqref{eq:Weyl-volume} are polynomials
in $a_1,\ldots,a_r$.  Their difference vanishes for every
$(a_1,\ldots,a_r)\in\ZZ_{\ge0}^r$, and hence vanishes identically:
this follows by induction on $r$ from the fact that a nonzero
one-variable polynomial has only finitely many roots.  Thus
\eqref{eq:Weyl-volume} holds for every divisor class $D$.
Alternatively, the same factorization follows from
\cite[Theorem~2.24, p.~584]{OSWW} by comparing the highest-degree
term of its Euler-characteristic formula with Riemann--Roch.
Every factor $D\mapsto D\cdot\gamma$ is a nonzero rational
linear form.

\smallskip\noindent
\emph{Step 5: comparison under the continuous map.}
For any $D\in\Pic(F)$, the cycle class map identifies its
algebraic top intersection number with
$\langle c_1(D)^N,[F]\rangle$, and similarly for $X$
\cite[Chapter~19]{Fulton}. The definition of $A$, multiplicativity
of cup products, and $f_*[X]=d[F]$ therefore give
\begin{align*}
 (AD)^N
 &=\langle c_1(AD)^N,[X]\rangle
 =\langle f^*(c_1(D)^N),[X]\rangle\\
 &=\langle c_1(D)^N,f_*[X]\rangle=dD^N.
\end{align*}
This uses only the continuity of $f$, not its algebraicity.
Since the equality holds for every integral divisor class, it is
an identity of homogeneous polynomials over $\QQ$: a polynomial
vanishing on all points of $\ZZ^r$ is identically zero.

The dual homomorphism of integral lattices is
\[
 A^\vee:Q_\Psi^\vee\longrightarrow Q_\Phi^\vee,
 \qquad A^\vee(\gamma)=\gamma\circ A.
\]
In particular, for $D\in\Pic(F_\Phi)$ and
$\gamma\in\mathcal C_\Psi^+$, one has
$(AD)\cdot\gamma=D\cdot A^\vee\gamma$.
Using \eqref{eq:Weyl-volume} for both flag varieties, and
absorbing the nonzero denominators into constants
$c_\Phi,c_\Psi\in\QQ^\times$, we obtain
\begin{equation}\label{eq:coroot-factor-comparison}
 c_\Psi\prod_{\gamma\in\mathcal C_\Psi^+}
      (D\cdot A^\vee\gamma)
 =d\,c_\Phi\prod_{\delta\in\mathcal C_\Phi^+}
      (D\cdot\delta).
\end{equation}

Choose rational coordinates $x_1,\ldots,x_r$ on
$\Pic(F_\Phi)_\QQ$. Then \eqref{eq:coroot-factor-comparison}
is an equality in $\QQ[x_1,\ldots,x_r]$.
For any positive coroot $\gamma$, the associated linear factor is
$\ell_\gamma(D)=D\cdot\gamma$. Two such factors are proportional
if and only if their coroots span the same rational line.
By the root-system axiom just recalled, the only coroots on
the line $\QQ\gamma$ are $\gamma$ and $-\gamma$, exactly one
of which is positive. Thus positive coroots correspond bijectively to
coroot lines.

Every $\ell_\gamma$ is a nonzero linear polynomial, hence
irreducible. The factors on the right of
\eqref{eq:coroot-factor-comparison} are pairwise nonassociate
by the preceding observation; the same holds on the left since
$A^\vee\otimes\QQ$ is an isomorphism (Step~1).
Unique factorization therefore matches the factors on both sides
up to a permutation and nonzero rational multiples. It gives a
bijection of coroot lines, and hence a bijection of positive
coroots $\sigma:\mathcal C_\Psi^+\to\mathcal C_\Phi^+$.
However, $A^\vee$ need not carry a coroot to a coroot:
the corresponding linear factors are determined only up to
nonzero scalars. Accordingly, for each
$\gamma\in\mathcal C_\Psi^+$ there is a number
$b_\gamma\in\QQ^\times$ such that
\[
 D\cdot A^\vee\gamma
 =b_\gamma(D\cdot\sigma(\gamma))
 \quad\text{for every }D\in\Pic(F_\Phi)_\QQ.
\]
As the pairing with divisor classes is nondegenerate, this
means
\begin{equation}\label{eq:coroot-line-correspondence}
 A^\vee\gamma=b_\gamma\sigma(\gamma).
\end{equation}
Thus $A^\vee\otimes\QQ$ identifies the sets of rational
coroot lines of $\Psi$ and $\Phi$, but this does not yet give
an isomorphism of dual root systems. The integral lattice
information will be used in Step~7.

\smallskip\noindent
\emph{Step 6: recovering the Coxeter graphs.}
For $\Theta=\Phi,\Psi$, write $V_\Theta=\Pic(F_\Theta)_\RR$ and let
\[
 H_\gamma^\Theta=\{D\in V_\Theta:D\cdot\gamma=0\},\qquad
 \mathcal H_\Theta=\{H_\gamma^\Theta:
             \gamma\in\mathcal C_\Theta^+\}.
\]
These are the reflecting hyperplanes of the Weyl group. By
\eqref{eq:coroot-line-correspondence},
$A^{-1}(H_\gamma^\Psi)=H_{\sigma(\gamma)}^\Phi$.
Thus $A$ identifies the two reflection arrangements and sends
Weyl chambers, together with their walls, to Weyl chambers and
their walls.

We recover the Coxeter graph of the root system directly from
the reflection arrangement. Fix a base and let $C$ be its
fundamental Weyl chamber. Let $H_i,H_j$ be two distinct walls of $C$,
let $s_i,s_j$ be the corresponding reflections, and set
$E=H_i\cap H_j$ and $m_{ij}=\operatorname{ord}(s_is_j)$.
Choose $x\in\overline C\cap E$ lying on no wall of $C$
other than $H_i,H_j$.
By \cite[\S10.3, Lemma~B]{Humphreys}, the stabilizer of a
point in the closure of the fundamental Weyl chamber is generated by the
simple reflections fixing that point. Hence
\[
 \operatorname{Stab}_W(x)=\langle s_i,s_j\rangle.
\]
Since $s_i,s_j$ fix $E$ pointwise and every element fixing
$E$ pointwise fixes $x$, the \emph{pointwise stabilizer}
$W_E:=\{w\in W:w|_E=\operatorname{id}_E\}$ equals
$\langle s_i,s_j\rangle$.

Put $R=s_is_j$. The relations $R^{m_{ij}}=1$ and
$s_iRs_i=R^{-1}$ show that the elements of $W_E$ are
\[
 R^k\quad\text{and}\quad s_iR^k,
 \qquad 0\le k<m_{ij}.
\]
On the two-dimensional orthogonal complement of $E$,
the first $m_{ij}$ elements are rotations and the other
$m_{ij}$ are reflections. Thus $W_E$ is a dihedral group
with precisely $m_{ij}$ reflections
\cite[\S9.4, Exercises~3--4]{Humphreys}.
A reflection fixes $E$ pointwise if and only if its
reflecting hyperplane contains $E$. Consequently,
\begin{equation}\label{eq:coxeter-incidence}
 m_{ij}=\#\{H\in\mathcal H_\Theta:E\subseteq H\}.
\end{equation}
We now explain how this counting formula determines the Coxeter graph.
Choose a chamber $C_\Phi$ and label its walls
$H_1^\Phi,\ldots,H_r^\Phi$. Since $A$ identifies the arrangements,
$C_\Psi:=A(C_\Phi)$ is a chamber with walls
$H_i^\Psi:=A(H_i^\Phi)$. For $\Theta=\Phi,\Psi$, let
$s_i^\Theta$ be the reflection with hyperplane $H_i^\Theta$, and put
\[
 E_{ij}^\Theta:=H_i^\Theta\cap H_j^\Theta,
 \qquad m_{ij}^\Theta:=\operatorname{ord}(s_i^\Theta s_j^\Theta)
 \quad(i\ne j).
\]
Because $A$ is a linear isomorphism,
\[
 A(E_{ij}^\Phi)=E_{ij}^\Psi,
 \qquad E_{ij}^\Phi\subseteq H
 \ \Longleftrightarrow\ E_{ij}^\Psi\subseteq A(H).
\]
Thus $H\mapsto A(H)$ gives a bijection between the reflecting
hyperplanes containing $E_{ij}^\Phi$ and those containing
$E_{ij}^\Psi$. Applying \eqref{eq:coxeter-incidence} on both sides gives
\[
 m_{ij}^\Phi
 =\#\{H\in\mathcal H_\Phi:E_{ij}^\Phi\subseteq H\}
 =\#\{H'\in\mathcal H_\Psi:E_{ij}^\Psi\subseteq H'\}
 =m_{ij}^\Psi.
\]
In particular, the orders of the products of corresponding wall
reflections agree. This conclusion uses only containment relations
among hyperplanes.

To relate these orders to the graph in
\cite[\S11.2]{Humphreys}, choose simple roots
$\alpha_1,\ldots,\alpha_r$ for a chamber. With respect to a
Weyl-invariant positive-definite inner product $(\ ,\ )$, their
Cartan integers are
\[
 c_{ij}:=\alpha_j^\vee(\alpha_i)
 =\frac{2(\alpha_i,\alpha_j)}{(\alpha_j,\alpha_j)}.
\]
Thus $c_{ij}$ is the number denoted
$\langle\alpha_i,\alpha_j\rangle$ in
\cite[\S\S9.1, 11.1]{Humphreys}; the angle brackets there do not
denote the inner product. In Humphreys's convention, the
\emph{Coxeter graph} has one vertex for each simple root, or
equivalently each chamber wall, and $c_{ij}c_{ji}$ edges between
vertices $i$ and $j$, without arrows. In the two-dimensional space
$E^\perp$ used above, the $m_{ij}$ reflecting lines divide the plane
into $2m_{ij}$ congruent sectors, and the two walls bound one of
them. Its angle is therefore $\pi/m_{ij}$. The corresponding
simple roots are inward normals
to its sides, so their angle is $\pi-\pi/m_{ij}$. Hence
\cite[\S9.4]{Humphreys}
\[
 c_{ij}c_{ji}=4\cos^2(\pi/m_{ij}),
 \qquad
 \begin{array}{c|cccc}
 m_{ij}&2&3&4&6\\ \hline
 c_{ij}c_{ji}&0&1&2&3
 \end{array}
\]
Thus equality of the $m_{ij}$ gives equality of the number of edges
between each pair of corresponding vertices. The Coxeter graphs,
including their connected components, are therefore isomorphic.
For the relation between the intersection lattice of a reflection
arrangement and the poset of parabolic subgroups, see
\cite[Theorem~3.1]{BarceloIhrig}.

Here ``connected components'' means connected components of the
Coxeter graph. They correspond precisely to the irreducible
components in the orthogonal decomposition of the root system.
By the classification \cite[\S11.4]{Humphreys}, the graph of each
irreducible component determines its type except that $B_m$ and
$C_m$ have the same graph for $m\ge3$;
$B_2$ and $C_2$ are isomorphic root systems. Indeed, the graph
records only the products $c_{ij}c_{ji}$: on a double edge it
cannot distinguish $(c_{ij},c_{ji})=(-1,-2)$ from $(-2,-1)$.
Consequently the irreducible factors of $\Phi$ and $\Psi$ have
the same types up to possible exchanges $B_m\leftrightarrow C_m$.
Ruling out these exchanges is the purpose of Step~7.

\smallskip\noindent
\emph{Step 7: excluding $B_m/C_m$ by odd index.}
We now use the oddness of $\det A$. Choose fundamental chambers
$C_\Phi^0,C_\Psi^0$. The arrangement isomorphism $A$ from
Step~6 sends $C_\Phi^0$ onto some chamber of $\Psi$.
Since $W_\Psi$ acts transitively on its chambers, an element
$w\in W_\Psi$ sends that image onto $C_\Psi^0$.
For this step only, replace $A$ by $w\circ A$.
The Weyl group preserves the weight lattice
\cite[\S13.2, opening paragraph]{Humphreys}.
Thus both $w$ and $w^{-1}$ act integrally on $P_\Psi$,
so $\det w=\pm1$.
The replacement still identifies coroot lines and has odd
determinant. We use it only to align chambers; the intersection
polynomial identity of Step~5 is not needed after this replacement.
Since $w$ permutes coroot lines, the line correspondence
\eqref{eq:coroot-line-correspondence} still holds for the
replacement, with $\sigma$ and $b_\gamma$ relabeled if necessary.

Let $\alpha_i,\beta_i$ be the simple roots for the two chosen
chambers. After relabeling the simple roots of $\Psi$ according
to the images of the walls, we have
\[
 H_{\alpha_i^\vee}^\Phi=A^{-1}(H_{\beta_i^\vee}^\Psi),
 \qquad
 \ker(A^\vee\beta_i^\vee)=\ker(\alpha_i^\vee).
\]
Nonzero linear forms with the same kernel are proportional:
both factor through the same one-dimensional quotient
of the weight space by their common kernel. Consequently,
\[
 A^\vee(\beta_i^\vee)=a_i\alpha_i^\vee
 \qquad(1\le i\le r).
\]
Since the simple coroots form integral bases of the two coroot
lattices, $a_i\in\ZZ$. Evaluating at $D\in C_\Phi^0$ gives
$\beta_i^\vee(AD)=a_i\alpha_i^\vee(D)>0$, whence $a_i>0$.
Thus, after matching the chambers and ordering their walls,
the matrix of $A^\vee$ in these simple-coroot bases is
$\operatorname{diag}(a_1,\ldots,a_r)$.

To compare determinants, let $\omega_i,\eta_j$ be the fundamental
weights dual to $\alpha_i^\vee,\beta_j^\vee$, respectively.
If $A\omega_i=\sum_j\mu_{ji}\eta_j$ and $M=(\mu_{ji})$, then
\[
 (A^\vee\beta_j^\vee)(\omega_i)
 =\beta_j^\vee(A\omega_i)=\mu_{ji},
 \qquad
 A^\vee\beta_j^\vee=\sum_i \mu_{ji}\alpha_i^\vee.
\]
Consequently the matrices of $A$ and $A^\vee$ in these dual
bases are $M$ and ${}^{t}\!M$, respectively. In particular,
$\det A^\vee=\det A$, even though the arrows of these two
homomorphisms point in opposite directions.
Changing integral bases affects determinants only by a sign, so
\[
 |\det A|=|\det A^\vee|=\prod_{i=1}^r a_i.
\]
Since this product is odd, every $a_i$ is a positive odd integer.

A coroot is \emph{primitive} in its coroot lattice:
for the dual root system, every coroot is Weyl-conjugate to
a simple coroot \cite[\S10.3, Theorem~(c)]{Humphreys},
and the simple coroots form an integral basis of the coroot
lattice \cite[\S13.1]{Humphreys}.
For $\gamma\in\mathcal C_\Psi^+$ put $\delta=\sigma(\gamma)$.
Since $\delta$ is primitive in $Q_\Phi^\vee$, it extends to
an integral basis. Expressing the integral vector
$A^\vee\gamma=b_\gamma\delta$ in this basis shows that
$b_\gamma\in\ZZ$. Since $\det A$ is odd, $A^\vee$
induces an isomorphism of coroot lattices modulo~$2$.
As a primitive coroot is nonzero modulo~$2$ and the reduction
of $A^\vee$ is injective, $b_\gamma$ must be odd. Therefore
\[
 A^\vee\gamma\equiv\sigma(\gamma)\pmod{2Q_\Phi^\vee}.
\]
For a connected component $I$ of the common Coxeter graph,
let $Q_{\Phi,I}^\vee$ and $Q_{\Psi,I}^\vee$ be the integral
spans of the corresponding simple coroots. Since
$A^\vee\beta_i^\vee=a_i\alpha_i^\vee$ with each $a_i$ odd,
$A^\vee$ restricts to a map
$Q_{\Psi,I}^\vee\to Q_{\Phi,I}^\vee$ which is an
isomorphism modulo~$2$. For any positive coroot $\gamma$
of the $I$-component of $\Psi$, the vector $A^\vee\gamma$
belongs to $Q_{\Phi,I}^\vee$. Thus
$\sigma(\gamma)$ also belongs to this component;
the bijection $\sigma$ preserves the component decomposition.
Since $b_\gamma$ is odd, the reduction of $A^\vee$
sends the set of residues of positive coroots of the
$I$-component of $\Psi$ \emph{bijectively as a set} onto
the corresponding set for $\Phi$. Their numbers of distinct
mod-$2$ classes are therefore equal.

For a root system $R$, define its \emph{root lattice} and the
number of distinct classes of positive roots by
\[
 Q(R):=\ZZ R,\qquad
 N_2(R):=\#\{\gamma+2Q(R):\gamma\in R^+\}.
\]
Here $R^+$ denotes any choice of positive roots; $N_2(R)$ is
independent of this choice because $\gamma\equiv-\gamma$
modulo~$2Q(R)$. Let $e_1,\ldots,e_m$ be the standard basis of
$\RR^m$. 
The standard coordinates for $B_m$ and $C_m$
\cite[\S12.1, p.~64]{Humphreys} yield
\begin{align*}
 Q(B_m)&=\ZZ^m,\qquad
 N_2(B_m)=m+\binom m2=\frac{m(m+1)}2,\\
 Q(C_m)&=\{(z_i)\in\ZZ^m:\sum_i z_i\text{ is even}\},\\
 N_2(C_m)&=1+2\binom m2=m(m-1)+1.
\end{align*}
Indeed, in $B_m$ the roots $e_i+e_j$ and $e_i-e_j$ have
the same reduction, whereas in $C_m$ they have distinct
reductions and all $2e_i$ have the same reduction.
The difference $N_2(C_m)-N_2(B_m)=(m-1)(m-2)/2$ is positive
for $m\ge3$. Applying this to the \emph{dual root systems}
($B_m^\vee=C_m$ and $C_m^\vee=B_m$), we see that no
$B_m/C_m$ exchange is possible.

The root systems of $X$ and $F$ are thus isomorphic, including
their irreducible factors. A central isogeny induces an
isomorphism of complete flag varieties, so the root system
determines the corresponding complete flag variety up to
isomorphism. Consequently $X\cong F$.
\end{proof}

\begin{remark}
Cuntz observes that the crystallographic arrangements of types
$B_m$ and $C_m$ are inequivalent for $m\ge3$
\cite[Example~2.6]{Cuntz}. The equivalence in
\cite[Definition~2.3]{Cuntz}, however, must carry the whole
set of root normals onto the other set. Our map $A^\vee$
initially identifies only coroot \emph{lines}, possibly multiplying
individual coroots by nonunit odd integers. Therefore Cuntz's
observation does not by itself rule out an exchange in the
present setting; the mod-$2$ argument in Step~7 does so.
\end{remark}

\begin{corollary}\label{cor:homotopy}
If a smooth Fano variety $X$ is homotopy equivalent to a complete flag
variety $F$, then $X\cong F$.
\end{corollary}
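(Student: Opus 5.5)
The plan is to reduce Corollary~\ref{cor:homotopy} to Theorem~\ref{thm:odd-degree-main}. Three hypotheses must be checked: $\dim X=\dim F$, $\rho(X)=\rho(F)$, and the existence of a continuous map $X\to F$ of odd degree.

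Let $f:X\to F$ be a homotopy equivalence with homotopy inverse $g$.

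\emph{Dimension.} The real dimension of a closed connected manifold is the largest $k$ with $H_k(-,\mathbb F_2)\neq0$. Both $X$ and $F$ are connected, compact, and complex (hence orientable). A homotopy equivalence induces isomorphisms on all homology groups, so $2\dim_\CC X=2\dim_\CC F$. Hence $\dim X=\dim F=:N$.

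\emph{Picard number.} Both $X$ and $F$ are smooth Fano varieties ($F$ being Fano since $-K_F$ is ample). By \eqref{eq:Pic-H2-Fano}, $\rho(X)=\operatorname{rank}H^2(X,\ZZ)$ and $\rho(F)=\operatorname{rank}H^2(F,\ZZ)$. These ranks agree because $f^*$ is an isomorphism on $H^2(-,\ZZ)$.

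\emph{Degree.} We have $H_{2N}(X,\ZZ)=\ZZ[X]$ and $H_{2N}(F,\ZZ)=\ZZ[F]$. The map $f_*$ on top homology is an isomorphism of infinite cyclic groups, since $g_*f_*$ and $f_*g_*$ are the identity. Therefore $f_*[X]=\pm[F]$, so $\deg f=\pm1$, which is odd.

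Theorem~\ref{thm:odd-degree-main} then gives $X\cong F$. There is no real obstacle here. The only point needing care is that the notion of topological degree requires equal dimensions, which is why the dimension equality is established before the degree is computed.
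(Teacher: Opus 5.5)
Your proposal is correct and follows the paper's own proof: a homotopy equivalence has degree $\pm1$ and identifies $H^2$, so the Picard numbers agree and Theorem~\ref{thm:odd-degree-main} applies. Your explicit check that $\dim X=\dim F$, via the top nonvanishing $\mathbb F_2$-homology, spells out a hypothesis that the paper leaves implicit.
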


\begin{proof}
A homotopy equivalence has degree $\pm1$ and induces an isomorphism
on $H^2(-,\QQ)$, hence the Picard numbers agree. Apply
Theorem~\ref{thm:odd-degree-main}.
\end{proof}

\begin{remark}[A shortcut for degree $\boldsymbol{\pm1}$ maps]
\label{rem:degree-one}
The stronger odd-degree theorem requires Steps~6--7, but these
steps can be omitted when $|\deg f|=1$. Indeed, define the
cohomological pushforward $f_!:H^2(X,\ZZ)\to H^2(F,\ZZ)$
by Poincar\'e duality, that is,
$f_!=\operatorname{PD}_F^{-1}\circ f_*\circ\operatorname{PD}_X$.
The projection formula gives
$f_!f^*=(\deg f)\operatorname{id}$.
Thus $f^*$ is a split injection of integral degree-two
cohomology groups. Since $\Pic(F)$ and $\Pic(X)$ are free
abelian groups of the same rank and coincide with these
cohomology groups via $c_1$, the map
$A:\Pic(F)\to\Pic(X)$ is an isomorphism of lattices.
So is its dual $A^\vee$. Every coroot is primitive in the
coroot lattice, and therefore
\eqref{eq:coroot-line-correspondence} forces
$b_\gamma=\pm1$. The bijection of coroot lines from
Step~5 is consequently an isomorphism of dual root systems,
giving $X\cong F$ without reconstructing the Coxeter graph
or separately ruling out $B_m/C_m$ exchanges.
In particular, this shorter argument applies when $f$ is
a homotopy equivalence.
\end{remark}

Related rigidity results for projective spaces and certain homotopy
types appear in \cite{HK,Yau,LW}. Mandal \cite{Mandal} studies
nonzero-degree maps between type-$A$ partial flags, whereas the source
in Theorem~\ref{thm:odd-degree-main} is initially an arbitrary smooth
Fano variety.

\begin{corollary}[Fano deformation rigidity; Weber--Wi\'sniewski]
\label{cor:deformation}
Let $p:\mathcal X\to S$ be a smooth projective morphism with connected
smooth complex base $S$ and nonempty smooth Fano fibers. If one fiber is
isomorphic to a complete flag variety $F$, then every fiber is isomorphic
to $F$.
\end{corollary}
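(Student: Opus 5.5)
The plan is to reduce to Theorem~\ref{thm:odd-degree-main}, and in fact to the shortcut of Remark~\ref{rem:degree-one}, using Ehresmann's theorem. Since $p$ is smooth and proper and $S$ is connected, $p$ is a $C^\infty$ locally trivial fibration in the classical topology. So any two fibers $X_s$ and $X_{s_0}$ are diffeomorphic, via parallel transport along a path in $S$. Let $X_{s_0}\cong F$ be the given flag fiber.

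We first check orientation. The relative complex structure varies continuously along the path, so the diffeomorphism $\phi:X_s\to X_{s_0}$ preserves the complex orientations. Hence $\deg\phi=1$, and in any case $\deg\phi=\pm1$ is odd.

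We then check that the hypotheses of Theorem~\ref{thm:odd-degree-main} hold for $X=X_s$ with the map $\phi$:
\begin{itemize}
\item $\dim X_s=\dim F$, because the fibers of a smooth morphism over a connected base are equidimensional.
\item $\rho(X_s)=\rho(F)$. For smooth Fano varieties, \eqref{eq:Pic-H2-Fano} gives $\Pic\simeq H^2(-,\ZZ)$, so $\rho=b_2$. Since $\phi$ is a homeomorphism, it identifies second Betti numbers.
\item $X_s$ is smooth Fano by assumption.
\end{itemize}
Theorem~\ref{thm:odd-degree-main} (or Corollary~\ref{cor:homotopy}, since a diffeomorphism is a homotopy equivalence) then gives $X_s\cong F$.

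The only point needing care is justifying that Ehresmann's theorem applies and yields a homotopy equivalence between fibers. The map $p^{an}$ is a proper submersion of complex manifolds, so the fiber bundle structure is standard. Connectedness of $S$ in the classical topology follows from its connectedness as a complex variety. Everything else is already contained in the earlier results, so I expect no substantive obstacle beyond recording these standard facts.
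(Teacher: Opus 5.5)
Your proposal is correct and matches the paper's proof: Ehresmann's theorem gives diffeomorphic fibers, so the complex dimensions, $b_2$ and hence the Fano Picard numbers agree, and the diffeomorphism has degree $\pm1$; then Theorem~\ref{thm:odd-degree-main} applies. Your extra observations, that the degree is in fact $1$ and that $S$ is connected in the classical topology, are correct. Only the oddness of the degree is actually needed.
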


\begin{proof}
Ehresmann's theorem \cite[Chapter~9, Theorem~9.3]{Voisin} gives a differentiable local trivialization of the
underlying proper smooth submersion. Since $S$ is connected, its fibers
are diffeomorphic; in particular, their complex dimensions, second
Betti numbers and hence their Fano Picard numbers coincide. The
homeomorphisms have degree $\pm1$. Apply
Theorem~\ref{thm:odd-degree-main}. This recovers
\cite[Theorem~1.1]{WeberWisniewski}.
\end{proof}

The original proof in \cite{WeberWisniewski} analyzes contractions in
a deformation family, using cohomological restrictions on fiber
dimensions (cf.~\cite[Lemma~2.2]{WeberWisniewski}). Here
Proposition~\ref{prop:surface-obstruction} directly excludes surfaces
in elementary fibers, bypassing that analysis. The shorter argument
also relies on the earlier characterization of flags by smooth
$\PP^1$-fibrations \cite{OSWW,OSW}. For related cohomological
constraints on extremal contractions, see \cite{WisCohomology}.

\begin{corollary}\label{cor:finite}
Let $F=G/B$ be a complete flag variety with no $\PP^1$-factor and
let $X$ be a smooth Fano variety with
$\dim X=\dim F$ and $\rho(X)=\rho(F)$.
Every finite surjective morphism $f:X\to F$ of odd degree is an isomorphism.
\end{corollary}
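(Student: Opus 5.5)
By Theorem~\ref{thm:odd-degree-main}, a finite surjective morphism is in particular a continuous map of degree $\deg f$, odd by hypothesis. So $X\cong F$, and the task reduces to showing that a finite surjective morphism $f:F\to F$ of odd degree is an isomorphism when $F$ has no $\PP^1$-factor. The plan is to show that $f$ is étale, and then use simple connectivity of $F$. Since $F$ is rationally connected, $\pi_1(F)=1$, so a finite étale surjection $F\to F$ is a connected covering of a simply connected space, hence of degree one and an isomorphism.

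To get étaleness, I would study the action of $f$ on the Mori cone. Because $f$ is finite, $f^*$ sends ample classes to ample classes. Both $\overline{NE}(F)$ and the nef cone are simplicial: the nef cone is spanned by the fundamental classes $L_1,\dots,L_r$, and the Mori cone by the $[\Gamma_i]$. Since $f^*$ is a linear automorphism of $N^1(F)_\RR$ preserving the nef cone, it permutes its extremal rays. Hence $f_*$ permutes the extremal rays $\RR_{\ge0}[\Gamma_i]$, and therefore $f$ permutes the elementary contractions $\pi_i:F\to G/P_i$. More precisely, for each $i$ there are $j$ and a finite morphism $g:G/P_i\to G/P_j$ with $\pi_j\circ f=g\circ\pi_i$, since $f$ maps each curve contracted by $\pi_i$ to a curve contracted by $\pi_j$. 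Consequently $f$ maps each fiber $\Gamma\simeq\PP^1$ of $\pi_i$ finitely onto a fiber of $\pi_j$.

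Next I would compare canonical classes through the ramification formula $K_F=f^*K_F+R$, where $R\ge0$. The aim is to show $R=0$. Intersect with a fiber $\Gamma_i$. If $f|_{\Gamma_i}:\Gamma_i\to\Gamma_j$ has degree $e_i$, then $f^*K_F\cdot\Gamma_i=e_i\,K_F\cdot\Gamma_j=-2e_i$, and so $R\cdot\Gamma_i=2e_i-2$. It would therefore suffice to prove $e_i=1$ for all $i$: then $R\cdot\Gamma_i=0$ for all $i$, and since the $\Gamma_i$ span $N_1$ and $R$ is effective, $R\equiv0$ and thus $R=0$ (an effective nonzero divisor on $F$ is nonzero in $N^1$, because $F$ is Fano with $h^1(\mathcal O)=0$). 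Purity of the branch locus then gives that $f$ is étale.

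The main obstacle is proving $e_i=1$, and this is exactly where the hypothesis excluding $\PP^1$-factors must enter. Indeed, $z\mapsto z^3$ on $\PP^1$ shows the statement fails for a $\PP^1$-factor. My approach would use Step~7 of the proof of Theorem~\ref{thm:odd-degree-main}, applied to the algebraic $f$ itself. After a Weyl element aligns chambers, the matrix of $A^\vee=f_*$ in simple coroot bases is $\operatorname{diag}(a_1,\dots,a_r)$ up to a permutation, and $a_i=e_i$. The remaining task is to show that the $a_i$ are constant on each connected component of the Coxeter graph and that they all equal one. For constancy, I would argue that $A^\vee\gamma=b_\gamma\sigma(\gamma)$ for every positive coroot. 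For adjacent simple coroots, $\beta_i^\vee+c\beta_j^\vee$ is a coroot for suitable $c\in\{1,2,3\}$, and its image $a_i\alpha_i^\vee+c\,a_j\alpha_j^\vee$ must be proportional to a coroot of the same rank-two subsystem. Inspecting the rank-two root systems $A_2,B_2,G_2$ should force $a_i=a_j$. Constancy alone does not give $a_i=1$, so the top self-intersection identity from Step~5 is also needed. That identity gives $(AD)^N=dD^N$ with $d=\prod$ over positive coroots of $b_\gamma$ (up to the constants), while also $d=\det A=\prod a_i$. With all $b_\gamma=a$ on an irreducible component of rank $k$ having $N_I$ positive roots, this yields $a^{N_I}=a^{k}$. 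Since $N_I>k$ exactly when the component is not $A_1$, that is, not a $\PP^1$-factor, we get $a=1$. This is where I expect the real work to lie: verifying that $b_\gamma$ is constant across all coroots of a component, not only the simple ones.
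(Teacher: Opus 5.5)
Your opening reduction matches the paper: Theorem~\ref{thm:odd-degree-main} gives $X\cong F$, and the paper then simply invokes Paranjape--Srinivas \cite{PS} for finite self-maps of $F$. Your attempt to replace that citation by a numerical argument breaks down at the step you single out, namely $e_i=1$. The identity $d=\det A$ has no justification. The degree $d$ is the action of $f^*$ on $H^{2N}$, whereas $|\det A|$ is only the index of $f^*$ on $H^2$; Step~1 of the paper shows that this index is odd, nothing more. Already $z\mapsto z^a$ on $\PP^n$ has $\det A=a$ but $d=a^n$.

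On $F$, the ring $H^*(F,\QQ)$ is generated by $H^2$. So a self-map with $A=a\cdot\mathrm{id}$ has $d=a^N$, and when $F$ has no $\PP^1$-factor, $d=\det A$ is equivalent to $a=1$: you are assuming the conclusion. Moreover, none of the data you use can exclude $a>1$. Multiplication by $a^k$ on $H^{2k}(F,\ZZ)$ is a graded ring endomorphism, and it satisfies all of the following:
\begin{enumerate}[label=\textup{(\alph*)}]
\item $A^\vee\gamma=a\gamma$ for every coroot, so your constancy holds;
\item the Step~5 identity, with $d=a^N$;
\item $\det A=a^r$ and $d$ are both odd when $a$ is odd;
\item the ramification formula, which only gives $R\equiv(a-1)(-K_F)$, an ample class, so an effective representative is not excluded.
\end{enumerate}
Ruling this out needs genuinely geometric information about algebraic self-maps, which is exactly what \cite{PS} supplies.

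The constancy claim is also false as stated. For $G_2$, let $u$ and $v$ be the short and long simple coroots, so the positive coroots are $u,v,u+v,2u+v,3u+v,3u+2v$. The integral map $u\mapsto av$, $v\mapsto 3au$ sends these six lines bijectively onto the coroot lines $v,u,3u+v,3u+2v,u+v,2u+v$. It preserves the simple-coroot cone and has determinant $-3a^2$, which is odd for $a$ odd. So the rank-two inspection allows $a_2=3a_1$ and does not force $a_i=a_j$; oddness kills only the analogous ratio-$2$ swap in $B_2$.

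One smaller point: $f^*$ maps the nef cone \emph{onto} itself because, for finite surjective $f$, $D$ is nef if and only if $f^*D$ is nef, since every curve is the image of a curve. Ampleness of pullbacks alone gives only an inclusion. The other reductions are fine: compatibility with the $\PP^1$-fibrations, $R\cdot\Gamma_i=2e_i-2$, and $R=0$ implies an isomorphism. The last in fact follows directly from $d(-K_F)^N=(-K_F)^N$.
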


\begin{proof}
By Theorem~\ref{thm:odd-degree-main}, $X\cong F$. The morphism $f$
thus becomes a finite surjective self-morphism of $F$. The theorem
of Paranjape--Srinivas \cite{PS} on surjective algebraic self-maps
of rational homogeneous spaces shows that such a morphism is an
automorphism when $F$ has no projective-space factor. For a complete
flag variety, such a factor can only be $\PP^1$.
\end{proof}

The exclusion of $\PP^1$-factors is essential: a self-map
$\PP^1\to\PP^1$ of degree three is finite but not an automorphism.

\section*{Acknowledgements}
This work was partially supported by JSPS KAKENHI Grant Number 25K06940.

\section*{AI Disclosure}
The starting point of this research was the author's work on the nef
complexity of klt Fano varieties \cite{WatanabeNefComplexity}. In particular,
the graph-theoretic argument involving mixed-intersection numbers in the
proof of \cite[Proposition~5.2, Step~1]{WatanabeNefComplexity} suggested to the
author that analogous intersection-theoretic data might recover the Dynkin
diagram of a complete flag variety. This question was also motivated by the author's
earlier joint work on Dynkin diagrams and geometric characterizations
of flag varieties \cite{MOSW,OSWW}.

The subsequent research developed through extensive discussions with
OpenAI's ChatGPT, using the \textbf{GPT-5.6 Sol} and \textbf{GPT-6}
models, during September and October 2026. In response to questions and
mathematical directions supplied by the author, ChatGPT suggested
intermediate statements, approaches to proofs, and mathematical
arguments. Through these exchanges, the initial question concerning
mixed intersections evolved into the numerical characterization and
rigidity results established in this paper. ChatGPT also assisted with
examining and revising proofs, searching the literature, checking
references, and preparing the manuscript.

The author critically examined and revised the mathematical arguments
and takes full responsibility for the correctness of the results and
the accuracy of the references.


\begin{thebibliography}{99}
\enlargethispage{4\baselineskip}

\bibitem{BarceloIhrig}
H. Barcelo and E. Ihrig,
\emph{Lattices of parabolic subgroups in connection with hyperplane arrangements},
J. Algebraic Combin. \textbf{9} (1999), no.~1, 5--24.

\bibitem{Borel}
A. Borel,
\emph{Sur la cohomologie des espaces fibr\'es principaux et des espaces
homog\`enes de groupes de Lie compacts},
Ann. of Math. (2) \textbf{57} (1953), 115--207.

\bibitem{Bourbaki}
N. Bourbaki,
\emph{Lie Groups and Lie Algebras: Chapters 4--6},
Springer, 2002.

\bibitem{Cuntz}
M. Cuntz,
\emph{Crystallographic arrangements: Weyl groupoids and simplicial arrangements},
Bull. Lond. Math. Soc. \textbf{43} (2011), no.~4, 734--744.

\bibitem{Demazure}
M. Demazure,
\emph{Invariants sym\'etriques entiers des groupes de Weyl et torsion},
Invent. Math. \textbf{21} (1973), 287--301.

\bibitem{Fulton}
W. Fulton,
\emph{Intersection Theory}, 2nd ed.,
Springer, Berlin, 1998.

\bibitem{FultonHarris}
W. Fulton and J. Harris,
\emph{Representation Theory: A First Course},
Graduate Texts in Mathematics, vol.~129, Springer-Verlag,
New York, 1991.

\bibitem{GSB}
I. Grojnowski and N. Shepherd-Barron,
\emph{Recognising flag varieties and reductive groups},
Proc. London Math. Soc. \textbf{133} (2026), no.~2, e70190.

\bibitem{Hartshorne}
R. Hartshorne,
\emph{Algebraic Geometry}, Graduate Texts in Mathematics, vol.~52,
Springer-Verlag, New York, 1977.

\bibitem{Hatcher}
A. Hatcher, \emph{Algebraic Topology}, Cambridge University Press,
Cambridge, 2002.

\bibitem{HK}
F. Hirzebruch and K. Kodaira,
\emph{On the complex projective spaces},
J. Math. Pures Appl. (9) \textbf{36} (1957), 201--216.

\bibitem{HN}
A. H\"oring and C. Novelli,
\emph{Mori contractions of maximal length},
Publ. Res. Inst. Math. Sci. \textbf{49} (2013), no.~1, 215--228.

\bibitem{Humphreys}
J. E. Humphreys,
\emph{Introduction to Lie Algebras and Representation Theory},
Graduate Texts in Mathematics, vol.~9,
Springer-Verlag, New York, 1972.

\bibitem{IP}
V. A. Iskovskikh and Yu. G. Prokhorov,
\emph{Fano Varieties}, in \emph{Algebraic Geometry V},
Encyclopaedia of Mathematical Sciences, vol.~47,
Springer-Verlag, Berlin, 1999.

\bibitem{KM}
J. Koll\'ar and S. Mori,
\emph{Birational Geometry of Algebraic Varieties},
Cambridge Tracts in Mathematics, vol.~134,
Cambridge University Press, Cambridge, 1998.

\bibitem{LW}
A. S. Libgober and J. W. Wood,
\emph{Uniqueness of the complex structure on K\"ahler manifolds of certain homotopy types},
J. Differential Geom. \textbf{32} (1990), 139--154.

\bibitem{LurieBWB}
J. Lurie,
\emph{A proof of the Borel--Weil--Bott theorem},
expository note;
\url{https://people.math.harvard.edu/~lurie/papers/bwb.pdf}.

\bibitem{Mandal}
M. Mandal,
\emph{Degrees of maps and cohomological rigidity of partial flag manifolds},
arXiv:2608.09183v2 (2026).

\bibitem{MilnorStasheff}
J. W. Milnor and J. D. Stasheff,
\emph{Characteristic Classes},
Annals of Mathematics Studies, vol.~76, Princeton University Press, Princeton, 1974.

\bibitem{MOSW}
R. Mu\~noz, G. Occhetta, L. E. Sol\'a Conde, and K. Watanabe,
\emph{Rational curves, Dynkin diagrams and Fano manifolds with nef tangent bundle},
Math. Ann. \textbf{361} (2015), no.~3--4, 583--609.

\bibitem{OSWW}
G. Occhetta, L. E. Sol\'a Conde, K. Watanabe, and J. A. Wi\'sniewski,
\emph{Fano manifolds whose elementary contractions are smooth $\PP^1$-fibrations: a geometric characterization of flag varieties},
Ann. Sc. Norm. Super. Pisa Cl. Sci. (5) \textbf{17} (2017), 573--607.

\bibitem{OSW}
G. Occhetta, L. E. Sol\'a Conde, and J. A. Wi\'sniewski,
\emph{Flag bundles on Fano manifolds},
J. Math. Pures Appl. (9) \textbf{106} (2016), 651--669.

\bibitem{PS}
K. H. Paranjape and V. Srinivas,
\emph{Self maps of homogeneous spaces},
Invent. Math. \textbf{98} (1989), no.~2, 425--444.

\bibitem{Voisin}
C. Voisin,
\emph{Hodge Theory and Complex Algebraic Geometry I},
Cambridge Studies in Advanced Mathematics, vol.~76,
Cambridge University Press, Cambridge, 2002.

\bibitem{WatanabeNefComplexity}
K. Watanabe,
\emph{Nef complexity of klt Fano varieties: the full spectrum and classification below two},
arXiv:2610.27676 (2026).

\bibitem{WeberWisniewski}
A. Weber and J. A. Wi\'sniewski,
\emph{On rigidity of flag varieties},
Int. Math. Res. Not. IMRN \textbf{2018} (2018), no.~9,
2967--2979.

\bibitem{WisCohomology}
J. A. Wi\'sniewski,
\emph{Cohomological invariants of complex manifolds coming from extremal rays},
Asian J. Math. \textbf{2} (1998), no.~2, 289--301.

\bibitem{WisContractions}
J. A. Wi\'sniewski,
\emph{On contractions of extremal rays of Fano manifolds},
J. Reine Angew. Math. \textbf{417} (1991), 141--157.

\bibitem{Yau}
S.-T. Yau,
\emph{On the Ricci curvature of a compact K\"ahler manifold and the complex Monge--Amp\`ere equation. I},
Comm. Pure Appl. Math. \textbf{31} (1978), 339--411.

\end{thebibliography}
\end{document}